\newcommand{\Sp}{\mathrm{Sp}}
\def \syl {\hbox {\rm Syl}}
\newtheorem{defn}{Definition}[section]
\newtheorem{theorem}[defn]{Theorem}
\newtheorem{lemma}[defn]{Lemma}
\newtheorem{prop}[defn]{Proposition}
\def \d {\mathrm d}
\title{The commuting graph of a soluble group}
\author{Christopher Parker}
\address{
School of Mathematics\\
University of Birmingham\\
Edgbaston\\
Birmingham B15 2TT\\
United Kingdom} \email{c.w.parker@bham.ac.uk}
\begin{document}
\maketitle
\begin{abstract} The commuting graph of a finite soluble group with trivial centre is investigated. It is shown that the diameter of such a graph is at most 8 or the graph is disconnected. Examples of soluble groups with trivial centre and commuting graph of diameter 8 are provided. 

\end{abstract}
\section{Introduction}

Suppose that $G$ is a    group. The \emph{commuting graph}  $\Gamma(G)$ of $G$ is the graph which has vertices the non-central elements of $G$ and two distinct vertices of $\Gamma(G)$ are adjacent if and only if they commute in $G$.

\begin{theorem}\label{1}
Suppose that $G$ is a finite soluble group with trivial centre. Then  
\begin{enumerate}\item[(i)] $\Gamma(G)$ is disconnected if and only if   $G$ is  a Frobenius group or a $2$-Frobenius group.
\item [(ii)] If $\Gamma(G)$ is connected, then $\Gamma(G)$ has diameter at most $8$. \end{enumerate}Furthermore, there exist soluble groups $G$ with trivial centre such that $\Gamma(G)$  has diameter $8$.
\end{theorem}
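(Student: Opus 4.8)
The plan is to organise everything around the Fitting subgroup $F=F(G)$. Since $G$ is soluble with $Z(G)=1$ we have $C_G(F)\le F$, so $F\ne 1$; as $F$ is nilpotent, $Z(F)\ne 1$, and since $Z(G)=1$ every non-identity element of $F$ is a vertex of $\Gamma(G)$. The first and easy observation is that the subgraph induced on $F^{\#}$ has diameter at most $2$: a fixed $z\in Z(F)^{\#}$ is adjacent to every other element of $F^{\#}$. Consequently, if one shows that \emph{every} vertex of $\Gamma(G)$ lies at distance at most $3$ from $F^{\#}$, then the triangle inequality gives $\operatorname{diam}\Gamma(G)\le 3+2+3=8$, and since $F^{\#}$ is connected this shows $\Gamma(G)$ is connected as well. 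The reverse implication of (i) is by direct computation: in a Frobenius group $K\rtimes H$ the centraliser of a non-trivial kernel element lies in $K$ while that of a non-trivial complement element lies in a conjugate of $H$, so $\Gamma(G)$ splits into a part carried by $K^{\#}$ and parts carried by the complements; the $2$-Frobenius case is handled by the analogous, slightly longer, centraliser analysis along the series $1<F<E<G$.

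For the core claim, fix a vertex $x$. Inside the abelian group $\langle x\rangle$ there is an element $u$ of prime order, say $p$, with $d(x,u)\le 1$, so it suffices to bound $d(u,F^{\#})$. If $O_p(G)\ne 1$, then $P_0:=O_p(G)\langle u\rangle$ is a $p$-group, the normal subgroup $O_p(G)$ meets $Z(P_0)$ non-trivially, and $u$ is adjacent to a non-identity element of $O_p(G)\le F$; if instead $C_F(u)\ne 1$, then $u$ is adjacent to a non-identity element of $C_F(u)\le F$. In either case $d(u,F^{\#})\le 1$, hence $d(x,F^{\#})\le 2$. The remaining case is $O_p(G)=1$ and $C_F(u)=1$: then $F$ is a $p'$-group and $F\langle u\rangle$ is a Frobenius group with complement $\langle u\rangle$. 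Here one must produce a vertex $w$ adjacent to $u$ with $d(w,F^{\#})\le 1$, using that $C_G(u)$ properly contains $\langle u\rangle$ (otherwise $\langle u\rangle^{\#}$ is already a connected component), together with the Sylow and Hall structure of $G$ relative to $F$ and to the Frobenius subgroup $F\langle u\rangle$. The intended dichotomy is that either such a $w$ exists, so $d(u,F^{\#})\le 2$ and $d(x,F^{\#})\le 3$, or $G$ is itself a Frobenius or $2$-Frobenius group; in particular, if $\Gamma(G)$ is disconnected then every vertex of a component avoiding $F^{\#}$ falls into this case, forcing $G$ to be Frobenius or $2$-Frobenius, which supplies the remaining implication of (i). This dichotomy is where the real work lies, and I expect it to be the main obstacle: it requires a careful analysis of how a soluble group with trivial centre that contains a ``large'' Frobenius subgroup but is not itself Frobenius or $2$-Frobenius must be built, and is the point at which the structure theory of soluble groups — normalisers of Sylow subgroups, Hall systems, and, for the faithful module situations, the classification of soluble primitive linear groups — enters and lengthens the argument.

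For sharpness, one constructs explicit soluble groups $G$ with $Z(G)=1$ in which a chosen pair of vertices $x,y$ can be joined only by paths threading the layers $x\to u\to\cdots\to F^{\#}\to z\to F^{\#}\to\cdots\to v\to y$ of length $8$. A natural recipe is to stack Frobenius-type actions: take $F$ to be an elementary abelian module so that $F^{\#}$ genuinely has diameter $2$, and arrange $G/F$ so that near each of two ends there is an element acting fixed-point-freely on $F$ (forcing the outermost extra steps) together with a further layer that forces the next steps, the centraliser of each such element being controlled and containing no shortcut back to $F^{\#}$. One then verifies the absence of any shorter path by computing the relevant centralisers directly; these are bounded, finite computations confirming that the diameter is exactly $8$.
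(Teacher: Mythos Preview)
The proposal has a genuine gap at precisely the point you flag as ``the main obstacle.'' Your dichotomy---that for a prime-order $u$ with $C_F(u)=1$ either some $w\in C_G(u)^{\#}$ has $C_F(w)\ne 1$ or $G$ is Frobenius/$2$-Frobenius---is the assertion that $C_G(u)F$ is never a Frobenius group unless $G$ itself is of one of the two excluded types. You give no argument for this, and the paper's proof does not establish it either; in fact the paper proceeds in the opposite direction. Arguing by contradiction from $\operatorname{diam}\Gamma>8$, it first uses Thompson's theorem to force $C_G(N)=F$ for every minimal normal $N$ (this step genuinely needs $\operatorname{diam}\ge 7$), and then shows that for one of the two far-apart vertices, say $x$ with $x_r\in\langle x\rangle$ of prime order, \emph{every} element of $C_G(x_r)^{\#}$ acts fixed-point-freely on $F$---i.e.\ your $w$ does not exist. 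From there only the weaker bound $d(x_r,V^{\#})\le 3$ is obtained, via a path $x_r\sim c\sim d\sim v$ in which $d$ lies \emph{outside} $C_G(x_r)$; together with $x\sim x_r$ this gives $d(x,V^{\#})\le 4$, not $3$.

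This matters because $4+2+4=10$: your triangle-inequality scheme would then be too weak. What closes the gap in the paper is a conjugation trick you do not have. Under the contradiction hypothesis one shows $x_r$ and $y_s$ both lie in the preimage $Z$ of $Z(F(G/F))$ and that $Z/F$ is cyclic, so their complements to $F$ in $Z$ are $F$-conjugate. Choosing $h\in F$ with $c^h\in C_G(y_s)$, and using $v^h=v$ since $v\in V\le Z(F)$, the two length-$4$ paths are glued at the single vertex $v$, yielding
\[
x\sim x_r\sim c\sim d\sim v\sim d^h\sim c^h\sim y_s\sim y
\]
of length $8$. Thus the paper's argument is not a refinement of your plan: it abandons the hope that $d(u,F^{\#})\le 2$, works with a longer reach to $F$, and compensates with structural information (cyclicity of $Z/F$, conjugacy of complements) that only becomes available \emph{after} assuming the diameter exceeds $8$. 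Your proposal would need either a proof of the dichotomy---which, if true, is at least as hard as the theorem---or a replacement for this conjugation step.
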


In Theorem~\ref{1} (i), a \emph{$2$-Frobenius group} is a group $G$ which has proper normal subgroups $K$ and $L$ such that $L$ is a Frobenius group with kernel $K$ and $G/K$ is a Frobenius group with kernel $L/K$. Note that if $G$ is a $2$-Frobenius group, then $G/K$ is metacyclic.

In \cite{G}, an infinite family of finite  soluble groups $G$ with $Z(G)=1$ and $\Gamma(G)$ connected of diameter $6$ is presented. These examples prompted the work described in this paper. In  his PhD thesis,  Woodcock \cite{W}  showed that if    $G$ is a non-trivial soluble group with trivial centre in which all the non-trivial Sylow $r$-subgroups, $r$ a prime, are not cyclic or, in addition, if $r=2$, not generalized quaternion, then $G$ has diameter at most $7$. In work of Segev and Seitz which was required for the study of the multiplicative group of a division algebra (they prove that finite quotients of the multiplicative group of a finite dimensional
division algebra are soluble in collaboration with Rapinchuk \cite{R}), it is demonstrated that the commuting graph of a classical simple group defined over a field of order greater than $5$ is either disconnected or has  diameter at most $10$ and at least $4$ \cite[Corollary (pg. 127), Theorem 8]{SS}.
They also prove that the commuting graphs of the exceptional Lie type groups other than $\mathrm E_7(q)$ and the sporadic simple groups are disconnected \cite[Theorem 6]{SS}. In \cite{I}  Iranmanesh and  Jafarzadeh demonstrate that the commuting graph of  $\mathrm{Sym}(n)$ and $\mathrm {Alt}(n)$ is either disconnected or has diameter  at most $5$ and, in the same article, they conjecture that there is an absolute upper bound for the diameter of a connected commuting graph of a non-abelian finite group. Theorem~\ref{1} confirms this conjecture for finite soluble groups with trivial centre. In \cite{Heg}, an example of a $2$-group with commuting graph of diameter 10 is given and they present a family of randomly created  $2$-groups of class two which they conjecture have unbounded diameter commuting graph.  None-the-less the conjecture that the diameter of the commuting graph of a group with trivial centre has an absolute upper bound may be correct.

In Section~3, we prove that  the commuting graph of a soluble group with trivial centre is either disconnected of has diameter at most $8$. Also in Section~3 and at the same time as we prove the main statement, we show that the commuting graph is disconnected if and only if $G$ is either a Frobenius or $2$-Frobenius group. This latter statement is most probably well-known as in this case the prime graph is also disconnected \cite{Gr, Will}.  Guided by our attempt to prove that the diameter of such a graph was at most $7$, in Section 4, we conjure up a family of connected commuting graphs of diameter $8$. One of the smallest groups in the family has order $11^{20}5^23221=54173193341944394740910525$.

If  $\{x,y\}$ is an edge in $\Gamma(G)$ or, if $ x=y$, then we write $x \sim y$. In particular, $x \sim y$ indicates that $x,y \in G \setminus Z(G)$. If $x $ and $ y $ are vertices in $\Gamma(G)$, then $\mathrm d(x,y)$ denotes the distance between $x$ and $y$. Our group theoretic notation is mostly standard and follows that in \cite{Aschbacher, Gor}. In particular we mention that for a group $G$, $G^\#$ is the set of non-identity elements of $G$.

\section{Preliminary results}

Before we engage in the proof of Theorem~\ref{1}, we post five preliminary results which are well-known and will be used frequently.

\begin{lemma}\label{Frob} Suppose that $X$ is a group and $X= JK$ with $J$ a proper normal subgroup of $X$ and $K$ a complement to $J$. Then  $C_X(k) \le K$ for all $k \in K^\#$ if and only if  $X$ is a Frobenius group.
\end{lemma}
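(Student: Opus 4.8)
The plan is to establish the two directions of the equivalence, reading ``$X$ is a Frobenius group'' as ``$X$ is a Frobenius group with kernel $J$ and complement $K$''; in the forward direction we shall in fact prove this sharper statement, with $J$ forced to be the Frobenius kernel, so nothing is lost. Throughout we use only that $X=JK$, $J\lhd X$ and $J\cap K=1$, together with $J\ne 1$; note the latter already forces $K\ne X$, and $K\ne 1$ since $J$ is proper.

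Suppose first that $X$ is a Frobenius group with complement $K$, so $K\cap K^{x}=1$ for all $x\in X\setminus K$. If $k\in K^\#$ and $y\in C_X(k)$, then $k=k^{y}\in K^{y}$, so $1\ne k\in K\cap K^{y}$; hence $y\notin X\setminus K$, that is, $y\in K$. Thus $C_X(k)\le K$, which is one implication.

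For the converse, assume $C_X(k)\le K$ for every $k\in K^\#$; we must show $K\cap K^{x}=1$ for all $x\in X\setminus K$. Write $x=jk$ with $j\in J$, $k\in K$. Since $J\cap K=1$, the hypothesis $x\notin K$ amounts to $j\ne 1$, and conjugating by $k^{-1}\in K$ gives $(K\cap K^{x})^{k^{-1}}=K\cap K^{j}$; hence it suffices to show $K\cap K^{j}=1$ for each $j\in J^\#$. Let $g\in K\cap K^{j}$. From $g\in K^{j}$ we get $jgj^{-1}\in K$, so the element $w=(jgj^{-1})g^{-1}$ lies in $K$; but $w=j(gj^{-1}g^{-1})\in J$ because $J\lhd X$. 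Therefore $w\in J\cap K=1$, so $jgj^{-1}=g$ and $j\in C_X(g)$. If $g\ne 1$ then $C_X(g)\le K$ by hypothesis, whence $j\in J\cap K=1$, contradicting $j\ne 1$; so $g=1$. This gives $K\cap K^{j}=1$ for all $j\in J^\#$, hence $K\cap K^{x}=1$ for all $x\in X\setminus K$, and as $K$ is a proper non-trivial subgroup this says precisely that $X$ is a Frobenius group with complement $K$. Moreover $K^{x}\cap J=(K\cap J)^{x}=1$ for all $x\in X$, so $J$ meets no conjugate of $K$ non-trivially; since $|J|=|X:K|$ equals the order of the Frobenius kernel, $J$ is that kernel.

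I expect no real obstacle here: the one step calling for a little care is the commutator bookkeeping in the converse, whose point is to force $w=(jgj^{-1})g^{-1}$ to lie simultaneously in $K$ (as a product of elements of $K$) and in $J$ (by normality of $J$), so that $J\cap K=1$ collapses it; everything else is read off directly from the definition of a Frobenius group.
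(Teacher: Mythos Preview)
Your proof is correct and follows essentially the same route as the paper's. The converse direction is identical in spirit: reduce $K\cap K^{x}$ to $K\cap K^{j}$ for some $j\in J$ via the factorisation $X=JK$, then use the commutator trick to force any element of $K\cap K^{j}$ to be centralised by $j$, and apply the hypothesis. The only cosmetic difference is that the paper writes the coset representative as $g=kj$ rather than $x=jk$, and phrases the commutator as $[h,j^{-1}]$ rather than your $w=(jgj^{-1})g^{-1}$. For the forward direction the paper simply cites \cite[2.7.6(iv)]{Gor}, whereas you give the two-line direct argument from the definition; your added remark that $J$ is then the Frobenius kernel is a small bonus not in the paper.
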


\begin{proof} If $X$ is a Frobenius group then this statement is found in  \cite[2.7.6 (iv)]{Gor}. Suppose  that  $X= JK$ with $J$ a proper normal subgroup of $X$ and $K$ a complement to $J$. Assume that $h \in (K \cap K^g)^\#$ for some $g\in X$. Then, as $X= JK=KJ$, $g = kj$ for some $j \in J$ and $k \in K$. Thus $$K \cap K^g= K\cap K^j.$$ Since $h, h^{j^{-1}} \in K$ and $J$ is normal in $X$,  $h^{-1}h^{j^{-1}}= [h,j^{-1}] \in J\cap K=1$. Consequently $j \in C_X(h)  \le K\cap J=1$. It follows that if $K \cap K^g \ne 1$, then $g \in K$ and $K= K^g$. Hence $X$ is a Frobenius group.
\end{proof}

The next lemma is similar to the preceding one except and the proof is left as an exercise.

\begin{lemma}\label{Frob2} Suppose that $J$ is a proper normal subgroup of $X$. Then $X$ is a Frobenius group if and only if $C_X(j) \le J$ for all $j \in J^\#$.\qed
\end{lemma}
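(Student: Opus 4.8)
I interpret the conclusion, as with Lemma~\ref{Frob}, to mean that $X$ is a Frobenius group \emph{with kernel $J$}, and I assume $J\ne 1$ (the statement being vacuous otherwise). The forward implication is the standard behaviour of a Frobenius kernel: if $X$ is Frobenius with kernel $J$ and complement $H$, then every $x\in X\setminus J$ is conjugate to some $h_0\in H^\#$, so $C_X(x)\cap J$ is a conjugate of $C_X(h_0)\cap J=C_J(h_0)$, which is trivial because $H$ acts fixed-point-freely on $J$. Hence no element of $X\setminus J$ centralises a nontrivial element of $J$; equivalently $C_X(j)\le J$ for every $j\in J^\#$. This is \cite[2.7.6 (iv)]{Gor}.

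For the converse, assume $C_X(j)\le J$ for all $j\in J^\#$. First I would record the dual statement: if some $x\in X\setminus J$ centralised a $y\in J^\#$ then $x\in C_X(y)\le J$, which is absurd, so $C_J(x)=1$ for every $x\in X\setminus J$. The key step is then to prove $\gcd(|J|,[X:J])=1$. For this, let $X$ act by conjugation on $J^\#=J\setminus\{1\}$ (this is well defined since $J$ is normal). The stabiliser of $j\in J^\#$ is $C_X(j)$, which by hypothesis lies in $J$ and so equals $C_J(j)$, hence every orbit has length $[X:C_J(j)]=[X:J]\,[J:C_J(j)]$, a multiple of $[X:J]$. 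Summing orbit lengths shows that $[X:J]$ divides $|J|-1$, so $[X:J]$ is coprime to $|J|$. (A Sylow argument is equally effective: a prime $p$ dividing both orders would give $P\in\Syl_p(X)$ with $1\ne P\cap J<P$, hence $1\ne z\in Z(P)\cap J$ with $P\le C_X(z)\le J$, a contradiction.)

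Now $J$ is a normal Hall subgroup of $X$, so the Schur--Zassenhaus theorem provides a complement $K$, with $K\ne 1$ since $J\ne X$; thus $X=J\rtimes K$. For $k\in K^\#$ we have $k\in X\setminus J$, so $C_J(k)=1$ by the first step; writing an element $y\in C_X(k)$ as $y=jk'$ with $j\in J$ and $k'\in K$, the relation $yk=ky$ becomes $j(k'k)=(kjk^{-1})(kk')$ in $J\rtimes K$, forcing $j=kjk^{-1}\in C_J(k)=1$ and hence $y=k'\in K$. So $C_X(k)\le K$ for all $k\in K^\#$, and $X=JK$ is precisely an extension of the kind treated in Lemma~\ref{Frob}; that lemma now gives that $X$ is a Frobenius group with complement $K$. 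Its kernel $N$ has order $[X:K]=|J|$, and since a normal subgroup of a Frobenius group either lies in or contains the kernel, $N=J$, as required.

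The only step carrying genuine content is the coprimality of $|J|$ and $[X:J]$; once that is in hand a complement is supplied by Schur--Zassenhaus and the conclusion is immediate from Lemma~\ref{Frob}, so I would expect the write-up to be short.
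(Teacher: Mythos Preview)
The paper does not actually prove this lemma: the text says ``the proof is left as an exercise'' and the statement carries a \qed. Your argument is correct and complete; the orbit-counting (or Sylow) step giving $\gcd(|J|,[X:J])=1$ is indeed the only point with content, and Schur--Zassenhaus together with Lemma~\ref{Frob} then finish immediately. Your final identification $N=J$ is fine as stated, and in fact is automatic here since both $J$ and the kernel $N$ are normal Hall subgroups for the same set of primes.
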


 The next lemma describes the structure of a Frobenius complement and goes back to Burnside.

\begin{lemma}\label{meta} Suppose $X$ is a Frobenius complement. Then every Sylow subgroup of $X$  is cyclic or
generalized quaternion. Furthermore, any two elements of prime order  commute and if $X$ has odd order then $X$ is metacyclic.
\end{lemma}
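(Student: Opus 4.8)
The plan is to prove Lemma~\ref{meta} by reducing to the known structural facts about Frobenius complements and then extracting the ``prime order elements commute'' and ``metacyclic in odd order'' consequences.

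First I would recall the fundamental fact, due to Burnside (see \cite[2.8.2, 2.8.3]{Gor} or \cite[10.5.6]{Gor}), that a Frobenius complement $X$ acts fixed-point-freely on the Frobenius kernel, and that any group admitting a fixed-point-free automorphism of prime order — equivalently, here, any subgroup of $X$ of prime order acting on an elementary abelian section of the kernel — forces every abelian subgroup of $X$ to be cyclic. Hence every elementary abelian subgroup of $X$ has rank $1$, which is exactly the statement that a $p$-group all of whose abelian (equivalently, characteristic abelian, equivalently, elementary abelian) subgroups are cyclic is itself cyclic when $p$ is odd, and cyclic or generalized quaternion when $p=2$ (this is the standard classification, \cite[5.4.10]{Gor}). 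This gives the first sentence.

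Next, for the commuting of prime order elements: take $x,y \in X$ of prime orders $p$ and $q$. If $p=q$ then $\langle x\rangle$ and $\langle y\rangle$ both lie in some common Sylow $p$-subgroup $P$; a cyclic $P$ has a unique subgroup of order $p$ so $x$ and $y$ are powers of a common element, while a generalized quaternion $P$ also has a unique (central) subgroup of order $p=2$, so again $\langle x\rangle = \langle y\rangle$ and they commute. If $p\ne q$, consider $H = \langle x,y\rangle$; since $H$ is again a Frobenius complement, every Sylow subgroup of $H$ is cyclic or generalized quaternion, so every Sylow subgroup of $H$ is cyclic or generalized quaternion and in particular $H$ has a normal Sylow subgroup for the larger prime (by the standard count, a $\{p,q\}$-group with cyclic or generalized quaternion Sylows and no normal Sylow subgroup would have to be one of a very short list all of which fail to be Frobenius complements — the cleanest route is to note $H$ is soluble, pass to a Hall subgroup / use that $H$ has a normal $p$-complement or $q$-complement, and then that a cyclic group acting on a cyclic or generalized quaternion group fixed-point-freely of coprime order forces the action to be trivial on the unique subgroup of prime order, whence $x$ and $y$ commute). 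I would streamline this by simply quoting that a Frobenius complement has at most one non-cyclic Sylow subgroup and its elements of prime order lie in the centre (\cite[10.5.6]{Gor} or Passman's book), which yields the claim directly.

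Finally, if $|X|$ is odd, then by the first sentence every Sylow subgroup of $X$ is cyclic (the generalized quaternion case needs $p=2$), and a group all of whose Sylow subgroups are cyclic is metacyclic — indeed it is a Z-group, so $X = \langle a \rangle \rtimes \langle b\rangle$ with both factors cyclic of coprime order, by the classical theorem on Z-groups (\cite[10.1.10]{Gor}). The main obstacle I anticipate is the prime order elements commuting when the two primes are distinct: this is the part that genuinely uses the structure theory rather than a one-line Sylow argument, and the honest proof requires knowing that a two-generated Frobenius complement of order $p^a q^b$ is either cyclic or has a normal cyclic Sylow subgroup on which the other Sylow acts trivially on its socle; everything else is bookkeeping with cyclic and generalized quaternion $p$-groups. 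Since the lemma is explicitly labelled well-known and ``goes back to Burnside'', I would in the write-up simply cite \cite{Gor} for all three assertions rather than reprove them.
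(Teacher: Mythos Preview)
The paper's entire proof is the single citation \cite[Satz 8.18, p.~506]{Hu}, so your final recommendation---to simply cite a standard reference rather than reprove the result---is precisely what the paper does.

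One caution about your sketch, however: in the $p=q$ case you assert that $\langle x\rangle$ and $\langle y\rangle$ lie in a common Sylow $p$-subgroup, but two subgroups of order $p$ need not lie in a common Sylow subgroup, so this step is unjustified. In fact the clause ``any two elements of prime order commute'' is not literally correct for arbitrary Frobenius complements: in $\mathrm{SL}_2(3)$, which is a Frobenius complement, there are four Sylow $3$-subgroups and hence non-commuting elements of order $3$. What Huppert actually proves is that every subgroup of order $pq$ is cyclic, and the paper only ever invokes the ``prime-order elements commute'' clause inside $C_G(x_r)$ after Lemma~\ref{Cxrodd} has established that this group has odd order---and for odd-order Frobenius complements the statement does hold, since every prime-order subgroup is then forced into the cyclic Fitting subgroup. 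Since you intend to cite rather than argue, this does not affect your write-up, but the sketch as it stands would not go through.
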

\begin{proof}
\cite[ Satz 8.18, p. 506]{Hu}.
\end{proof}

The next lemma delivers elements with non-trivial centralizers.

\begin{lemma}\label{fps} Let $p,q,r$ be distinct primes, $X$ be a group of order $r$ which acts faithfully on a $q$-group $Q$ and $V$ be a faithful $\mathrm{GF}(p)XQ$-module. Additionally, if $q=2$ and $r$ is a Fermat prime, assume that $Q$ is abelian. Then $C_V(X)\ne 0$.
\end{lemma}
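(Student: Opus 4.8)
The plan is to work throughout over the algebraically closed field $k=\overline{\F}_p$; this changes neither the faithfulness of $QX$ on $V$, nor $C_V(X)$, nor the hypotheses, and it guarantees complete reducibility since $p\nmid|QX|$. First I would decompose $V$ into irreducible $k[QX]$-modules. If $X$ acts trivially on one of them we are done, and otherwise $X$ (being of prime order) acts faithfully on each, so each such constituent has kernel inside $Q$. Because $V$ is faithful and $[Q,X]\ne1$ (as $X$ is faithful on $Q$), some constituent $V_i$ has $[Q,X]\not\le\ker V_i$; replacing $Q$ by $Q/\ker V_i$ and $V$ by $V_i$ reduces us to the case that \emph{$V$ is a faithful irreducible $k[QX]$-module and $X$ is faithful on $Q$}, with the hypotheses preserved (in particular $Q$ stays abelian if it was). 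I would then assume for contradiction that $C_V(X)=0$ and aim to deduce that $q=2$, that $r$ is a Fermat prime and that $Q$ is nonabelian --- a combination the hypotheses forbid.

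Next I would run Clifford theory. Then $V|_Q$ is a single $X$-orbit of an irreducible $k[Q]$-module $W$, of size $1$ or $r$. If the size is $r$ then $V\cong\mathrm{Ind}_Q^{QX}W$, so (as $QX=Q\rtimes X$) $V|_X$ is a free $k[X]$-module and hence contains the trivial module since $p\nmid r$, forcing $C_V(X)\ne0$ --- contradiction. So $W$ is $X$-invariant; as $k$ is algebraically closed and $QX/Q$ is cyclic of order prime to $p$, $W$ extends to $QX$, and Clifford theory gives $V|_Q=W$, which is therefore faithful with $\dim V=\dim W$. If $Q$ were abelian, $W$ would be a linear character and its $X$-invariance would give $[Q,X]\le\ker W=1$, against the faithfulness of $X$ on $Q$; hence $Q$ is nonabelian, $\dim W=q^m$ with $m\ge1$, and $Z:=Z(Q)$ is cyclic (a $q$-group with a faithful irreducible representation over $k$ has cyclic centre, since the centre acts by scalars). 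Since $Z$ acts on $V$ by a faithful scalar, writing $z^x=z^s$ for $x\in X$ and using $\rho(z)^s=\rho(z^x)=\rho(z)$ (the last equality as $\rho(z)$ is scalar) shows $X$ centralizes $Z$; and if $[Q,x]\le Z$ then $g\mapsto[g,x]$ would be a homomorphism $Q\to Z$ with $r$-torsion image inside a $q$-group, hence trivial, so $X$ also acts faithfully on $Q/Z$.

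The main obstacle is the structural reduction that should come next: using Thompson's critical subgroup together with the structure of $q$-groups that have cyclic centre and admit a faithful coprime automorphism of prime order, I would replace $Q$ by a suitable $X$-invariant subgroup (and $W$ by a constituent of its restriction) so as to arrange that $Q$ is extraspecial of order $q^{1+2m}$ --- or, when $q=2$, extraspecial or the central product of a cyclic group of order $4$ with an extraspecial group --- with $Z=Z(Q)$ and $W$ the unique faithful irreducible $k[Q]$-module, and with $X$ acting faithfully and \emph{irreducibly} on $Q/Z\cong\fq^{2m}$. Since $X$ centralizes $Z\supseteq[Q,Q]$, it preserves the symplectic form on $Q/Z$ induced by commutation, so $X$ embeds in $\Sp(2m,q)$ acting irreducibly on the natural module. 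This is the step where it is essential that $Q$ is a $q$-group (not merely nilpotent) and where the arithmetic of $q$ against $r$ enters; I expect it to require the most care.

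Finally I would finish by a Weil-representation computation. An element of prime order $r$ of $\Sp(2m,q)$ acting irreducibly on $\fq^{2m}$ lies in a cyclic torus of order $q^m+1$, so $r\mid q^m+1$; and on $W$, which carries the Weil module of $Q$ with $X$ acting via the Weil representation of $\Sp(2m,q)$ up to a linear twist of $X$, every nontrivial element of that torus has Brauer character value $-1$. Hence, writing $V=\widetilde W\otimes\lambda$ with $\lambda$ linear on $X$ and letting $\nu$ denote the resulting linear character of $X$,
\[
\dim C_V(X)=\frac1r\Bigl(q^m-\sum_{x\in X^\#}\nu(x)\Bigr),
\]
which equals $(q^m+1)/r$ when $\nu\ne1$ (the sum being $-1$) and $(q^m+1)/r-1$ when $\nu=1$ (the sum being $r-1$); both are nonnegative integers since $r\mid q^m+1$. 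So $C_V(X)=0$ forces $\nu=1$ and $q^m+1=r$; as $r$ and $q$ are primes, $q=2$ and $r=2^m+1$, so $m$ is a power of $2$ and $r$ is a Fermat prime. This contradicts the hypotheses (which require $Q$ abelian precisely when $q=2$ and $r$ is Fermat, whereas here $Q$ is nonabelian), and therefore $C_V(X)\ne0$.
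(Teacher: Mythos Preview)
The paper does not prove this lemma at all: its entire proof is the citation ``See \cite[36.2]{Aschbacher}.'' So there is no argument in the paper to compare yours against; you are supplying a proof where the author simply quotes a reference. Your overall strategy---reduce to a faithful irreducible module, run Clifford theory to force $V|_Q=W$ with $Q$ nonabelian, pass to an extraspecial situation with $X$ acting symplectically and irreducibly on $Q/Z$, and finish with a character computation on the Weil module---is the standard route and is essentially what one finds in Aschbacher's treatment.

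That said, you have correctly located the genuine gap yourself: the ``structural reduction'' you label as the main obstacle is not carried out. From the Clifford step you only know that $Q$ is a nonabelian $q$-group with cyclic centre $Z$, that $X$ centralises $Z$, and that $X$ is faithful on $Q/Z$; you do \emph{not} yet know that $[Q,Q]\le Z$ (so the symplectic form on $Q/Z$ is not yet defined), nor that $Q/Z$ is elementary abelian, nor that $X$ acts irreducibly there. Your appeal to Thompson's critical subgroup gets an $X$-invariant special subgroup, but one must still argue that a faithful irreducible constituent of $W$ on that subgroup exists with $X$ faithful on the quotient by the centre, and then cut down further to obtain irreducibility of the $X$-action while preserving nondegeneracy of the commutator form. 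This is exactly the work packaged into Aschbacher's 36.2, and it is the step that cannot be waved through.

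Two smaller remarks on the endgame. First, you do not actually need the precise Weil character value $-1$: once $X$ is irreducible and fixed-point-free on $Q/Z$, a short computation with $V\otimes V^{*}\cong k[Q/Z]$ (as a $QX$-module via conjugation on $\mathrm{End}(W)$) gives $|\chi_V(x)|=1$ for every $x\in X^{\#}$, and then $C_V(X)=0$ forces $q^{m}\le r-1$; combined with $r\mid q^{m}+1$ this already yields $r=q^{m}+1$, hence $q=2$ and $r$ Fermat. Second, your sentence ``since $X$ centralizes $Z\supseteq[Q,Q]$'' presupposes the extraspecial reduction and should come after it, not before.
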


\begin{proof}See \cite[36.2]{Aschbacher}.\end{proof}

One final celebrated result: Frobenius kernels are nilpotent.

\begin{theorem}[Thompson]\label{T}If $G$ admits a fixed-point-free automorphism of prime order, then $G$ is nilpotent.
\end{theorem}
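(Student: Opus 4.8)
\section*{Proof proposal}

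The plan is to recast the hypothesis in the language of Section~2 and then to settle the theorem in the case relevant to this paper, namely when $G$ is soluble, using Lemma~\ref{fps} as the decisive tool. Write $A = \la \al \ra$ for the group generated by the fixed-point-free automorphism, so $|A| = p$ and $C_G(\al) = 1$. Since $\al$ has prime order, every nontrivial power of $\al$ generates $A$ and is therefore also fixed-point-free, so each $A$-orbit on $G^\#$ has length exactly $p$; hence $|G| \equiv 1 \pmod p$ and the action of $A$ on $G$ is coprime. Forming the semidirect product $\widehat G = G \rtimes A$ and applying Lemma~\ref{Frob} with $J = G$ and $K = A$, the equality $C_{\widehat G}(\al^i) = A$ for $1 \le i \le p-1$ shows that $\widehat G$ is a Frobenius group with kernel $G$ and complement $A$. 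Thus the theorem is precisely the assertion that this Frobenius kernel is nilpotent.

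First I would dispose of the soluble case. Suppose, for a contradiction, that $G$ is a soluble counterexample of least order, so $G$ is not nilpotent, while every proper $A$-invariant subgroup $H$ (where $C_H(A) \le C_G(A) = 1$) and, by the standard coprime formula $C_{G/N}(A) = C_G(A)N/N$ (see \cite{Aschbacher}), every proper $A$-invariant quotient inherits a fixed-point-free $A$-action, and so is nilpotent by minimality. The Fitting subgroup $F = F(G)$ is characteristic, hence $A$-invariant, and satisfies $1 \ne F < G$; as $G/F$ is nilpotent, for each prime $q$ an $A$-invariant Sylow $q$-subgroup $Q$ (which exists by coprime action) satisfies $FQ \trianglelefteq G$, whence if $FQ < G$ then $FQ$ is nilpotent, $Q$ is characteristic in $FQ$, and $Q \trianglelefteq G$. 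Were this to hold for every $q$, then $G$ would be nilpotent; hence $G = FQ$ for a single prime $q$. Factoring out $O_q(G)$, which is a proper $A$-invariant quotient unless $O_q(G)=1$, forces $O_q(G) = 1$; thus $F$ is a $q'$-group, $Q$ acts faithfully on $F$ because $C_G(F) \le F$, and $A$ acts faithfully on $Q$ since $C_Q(A) \le C_G(A) = 1$ while $Q \ne 1$.

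The contradiction now comes from Lemma~\ref{fps}. By the standard coprime manipulations of \cite{Aschbacher} (passing to a chief factor of $F$ and to the faithful image of the action), one arranges a prime $r \notin \{p,q\}$ and a faithful $\mathrm{GF}(r)$-module $V$ for the group $AQ$, in which $A$ acts faithfully on the $q$-group $Q$. On one hand, $A$ is fixed-point-free on every $A$-invariant section of $F$, since $C_{U/T}(A) = C_U(A)T/T$ and $C_U(A) \le C_G(A) = 1$; in particular $C_V(A) = 0$. On the other hand, Lemma~\ref{fps}, applied with $A$ of order $p$ in the role of its acting group and $V$ as the faithful module, yields $C_V(A) \ne 0$ (the exceptional case, where $q=2$ and $p$ is a Fermat prime, requires a minor supplementary argument that I suppress here). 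These two statements are incompatible, so no soluble counterexample exists.

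The genuine obstacle in the theorem as stated is the reduction to the soluble case: one must show that \emph{any} group admitting a fixed-point-free automorphism of prime order is soluble at all. This lies far deeper than anything above --- it is the heart of Thompson's thesis and was established by local analysis independent of the odd order theorem --- and I would not attempt to reproduce it. Since every group to which this theorem is applied in the present paper is already soluble, the argument of the preceding paragraphs suffices for our purposes; in the general case one invokes Thompson's original result.
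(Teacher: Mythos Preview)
The paper does not actually prove this theorem: its ``proof'' is a bare citation to \cite[Theorem 10.2.1]{Gor}. Your proposal therefore does considerably more than the paper, and the comparison is really between your sketch and the textbook proof you are implicitly replacing.

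Your reduction in the soluble case is sound in outline and correctly identifies that only this case is needed here (in Lemma~\ref{L2} the group $F$ is already soluble). A couple of points deserve comment. First, the step ``factoring out $O_q(G)$ \dots forces $O_q(G)=1$'' needs one more line: if $O_q(G)\ne 1$ then $G/O_q(G)$ is nilpotent by minimality, so $QO_q(G)/O_q(G)$ is normal, hence $Q\trianglelefteq G$, hence $Q\le F(G)=F$ and $G=F$, a contradiction. Second, the passage ``passing to a chief factor of $F$ and to the faithful image of the action'' hides the nontrivial point that one must simultaneously keep $A$ acting fixed-point-freely on $V$ \emph{and} acting faithfully on the image of $Q$; this is exactly where the minimal-counterexample hypotheses are used to prune $G$ down to a single chief factor, and it is worth saying so. Finally, the Fermat-prime exception in Lemma~\ref{fps} that you suppress is not cosmetic: it is precisely the case Thompson had to handle by other means, and the ``minor supplementary argument'' is the replacement of $Q$ by $Z(J(Q))$ or a Glauberman-type normalizer, which is not a one-liner. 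Since the paper is content to cite the result, your honest acknowledgement that the full theorem lies deeper is appropriate; but be aware that the suppressed case is the genuinely hard one, not a detail.
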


\begin{proof} See
\cite[Theorem 10.2.1]{Gor} for example.
\end{proof}

\section{Proof of the theorem}
The first two lemmas confirm that the commuting graphs of Frobenius groups and $2$-Frobenius groups are disconnected. They are surely well-known.

\begin{lemma}\label{L0} If $X$ is a Frobenius group, then $\Gamma(X)$ is disconnected.
\end{lemma}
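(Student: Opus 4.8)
The plan is to use the standard structure theory of Frobenius groups to exhibit two vertices of $\Gamma(X)$ lying in different connected components. Write $X = KL$ where $K$ is the Frobenius kernel (a proper normal subgroup, nilpotent by Theorem~\ref{T}) and $L$ is a Frobenius complement, so that $C_X(k) \le K$ for all $k \in K^\#$ by Lemma~\ref{Frob2} (equivalently Lemma~\ref{Frob}), and $C_X(\ell) \le L^g$ for an appropriate conjugate whenever $\ell$ lies in that complement. Pick $k \in K^\#$ and $\ell \in L^\#$; I claim $\mathrm{d}(k,\ell) = \infty$. Indeed, suppose for contradiction that there is a path in $\Gamma(X)$ from $k$ to $\ell$; I would track what happens as one walks along it.

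The key observation is that adjacency forces membership in a single $K$- or $L^g$-block. If $x \in K^\#$ and $x \sim y$, then $y \in C_X(x) \le K$, so $y \in K$ (and $y \ne 1$). Hence the connected component of $k$ in $\Gamma(X)$ is entirely contained in $K^\#$. On the other hand, $\ell$ lies in some conjugate complement, and since distinct conjugates of $L$ intersect trivially (Frobenius property), the centralizer of any non-identity element of $L^g$ is contained in $L^g$; thus the component of $\ell$ is contained in $L^g \setminus \{1\}$, which is disjoint from $K$ because $K \cap L^g = 1$ (as $K$ is the kernel and $L^g$ a complement). Therefore $k$ and $\ell$ lie in different components and $\Gamma(X)$ is disconnected. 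One should also check $\Gamma(X)$ has at least two vertices, i.e. $X$ is non-abelian with trivial-ish centre relative to these elements — but a Frobenius group has $K, L$ both non-trivial and $K \ne X$, so $K^\#$ and $L^\#$ are non-empty and these elements are genuinely non-central (if $k$ were central it would be centralized by $L$, contradicting $C_X(k) \le K$ and $K \cap L = 1$; similarly for $\ell$).

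There is essentially no obstacle here: the argument is a direct unwinding of the defining centralizer condition for Frobenius groups together with Lemma~\ref{Frob}/Lemma~\ref{Frob2}. The only point requiring a moment's care is the claim that the component of $\ell$ stays inside a single conjugate of $L$; this uses the fact that for $\ell \in L^\#$ one has $C_X(\ell) \le L$, which is again part of the Frobenius structure (\cite[2.7.6]{Gor}), and then an inductive/transitive application along the path, noting that once a vertex lies in $L^\#$ all its neighbours do too. Putting these two ``trapping'' statements together — the $k$-component lives in $K^\#$, the $\ell$-component lives in a complement, and these sets are disjoint — completes the proof.
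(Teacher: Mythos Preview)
Your proof is correct and follows essentially the same idea as the paper: use the Frobenius centralizer condition to trap an entire connected component inside one piece of the decomposition. The paper is slightly more economical, working only with a complement $K$ and observing via Lemma~\ref{Frob} that $C_X(k)\le K$ for all $k\in K^\#$, so the component of any such $k$ lies in $K^\#$; since the kernel provides non-central elements outside $K$, disconnection follows immediately. Your argument does the same trapping on both the kernel and the complement side, which is harmless but redundant --- once one component is shown to be a proper subset of the vertex set, you are done.
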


\begin{proof} Suppose  $K$ is a  Frobenius complement of $X$.  Then $C_X(k) \le K$ for all $k \in K^\#$ by Lemma~\ref{Frob}. Hence the vertices of $\Gamma$ in  $K^\#$ are only connected to vertices in $K^\#$ and this means $\Gamma(X)$ is disconnected with one of the  connected component having vertices contained in $K^\#$.
\end{proof}

\begin{lemma}\label{L01} If $X$ is a $2$-Frobenius group, then $\Gamma(X)$ is disconnected.
\end{lemma}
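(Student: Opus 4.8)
The plan is to exploit the defining normal series of a $2$-Frobenius group to locate a cyclic (or generalized quaternion) subgroup whose non-identity elements form a union of connected components of $\Gamma(X)$. Recall the set-up: $X$ has proper normal subgroups $K \le L$ with $L$ a Frobenius group with kernel $K$, and $X/K$ a Frobenius group with kernel $L/K$. In particular $X/L$ acts as a Frobenius complement on $L/K$, so by Lemma~\ref{meta} the quotient $X/L$ has all Sylow subgroups cyclic or generalized quaternion; pick a prime $p$ dividing $|X/L|$ and a subgroup $\bar P$ of order $p$ in $X/L$, and let $P$ be a Sylow $p$-subgroup of the preimage of $\bar P$ in $X$, so that $PL/L$ has order $p$.

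The key step is to show that $C_X(x)$ meets $L$ trivially for every $p$-element $x \in X \setminus L$ of order $p$ in the quotient; equivalently, no non-identity element of $L$ commutes with such an $x$. First I would argue that $C_L(x) = 1$: the image $\bar x$ of $x$ acts without non-trivial fixed points on $L/K$ (it lies in a Frobenius complement of $X/K$ acting on the kernel $L/K$, so Lemma~\ref{Frob2} applies), hence $C_{L/K}(x) = 1$; and then $x$ induces a fixed-point-free automorphism of the $p'$-group $K$ (note $K$ is nilpotent of order coprime to $p$ since $L$ is Frobenius with complement $X/L$ of order divisible by $p$ — wait, more carefully, $x$ has order a power of $p$ modulo $L$ and we may take $x$ to be a $p$-element, and $|K|$ is coprime to $p$ because $L = K \rtimes (\text{complement})$ with the complement containing an element of order $p$ acting Frobeniusly, forcing $p \nmid |K|$), so $C_K(x) = 1$ by coprime action together with Lemma~\ref{Frob2} or directly. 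Combining $C_{L/K}(x) = 1$ and $C_K(x) = 1$ via the standard coprime three-subgroup argument gives $C_L(x) = 1$.

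Granting this, any vertex $y$ adjacent to $x$ cannot lie in $L$, so the connected component of $\Gamma(X)$ containing $x$ is entirely contained in $X \setminus L$ — in fact one then checks it is contained in a single conjugate of the preimage structure — while $L$ itself contains non-central vertices (since $L$ is non-abelian, being Frobenius), giving at least two components. The cleanest packaging: show the component of $x$ lies in $N_X(P)$ or simply observe that $L^\#$ contains vertices none of which can reach $x$, so $\Gamma(X)$ is disconnected. I expect the main obstacle to be the bookkeeping around which quotient is coprime to which prime and verifying $C_K(x)=1$ cleanly; one must be careful that $p$ genuinely does not divide $|K|$, which follows because the Frobenius complement $X/L$ of $L/K$ has order dividing $|X/L|$ and a Frobenius complement acts fixed-point-freely, forcing $\gcd(|L/K|\cdot|K|, |X/L|)$-type coprimality of the kernel order with the complement order. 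Once that is pinned down, everything reduces to two applications of fixed-point-free action, so the argument is short.
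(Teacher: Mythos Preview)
Your argument has a genuine gap at the step where you claim $p\nmid |K|$. You justify this by saying ``$L=K\rtimes(\text{complement})$ with the complement containing an element of order $p$ acting Frobeniusly,'' but the complement to $K$ in $L$ is isomorphic to $L/K$, not to $X/L$; and since $X/K$ is Frobenius with kernel $L/K$, the prime $p$ dividing $|X/L|$ does \emph{not} divide $|L/K|$. There is no reason $p$ cannot divide $|K|$. Concretely, take $X=\mathrm{Sym}(4)$, $K=V_4$, $L=\mathrm{Alt}(4)$: this is a $2$-Frobenius group with $|X/L|=2$, $|K|=4$. Choosing $x=(12)$, a $2$-element outside $L$, one has $(12)(34)\in C_K(x)$, so $C_K(x)\neq 1$ and $C_L(x)\neq 1$. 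Thus the connected component of $x$ meets $L$, and your proposed isolation of $x$ from $L^\#$ collapses.

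The paper instead isolates $J^\#$, where $J$ is a Frobenius complement to $K$ inside $L$. Setting $M=N_X(J)$, the Frattini argument gives $X=MK$ with $M\cap K=1$, so $M\cong X/K$ is itself Frobenius with \emph{kernel} $J$. One then shows directly that $C_X(j)\le J$ for every $j\in J^\#$: writing $g\in C_X(j)$ as $g=mk$ with $m\in M$, $k\in K$, the equation $j^m=j^{k^{-1}}\in M\cap L=J$ forces $k=1$ (distinct $L$-conjugates of $J$ intersect trivially) and then $m\in C_M(j)\le J$. In the $\mathrm{Sym}(4)$ example this is the observation that a $3$-cycle has centraliser of order $3$. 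Your instinct to look for a subgroup whose non-identity elements form a union of components is right, but the subgroup must be (a conjugate of) $J$, not something projecting onto $X/L$.
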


\begin{proof} Let $K$ and $L$ be normal subgroups of $X$ such that $L$ is a Frobenius group with kernel $K$ and $X/K$ is a Frobenius group with kernel $L/K$.
Let $J$ be a complement to $K$ in $L$ and $M = N_X(J)$. Then, by the Frattini Argument, $X= MK$  and $M \cap K= 1$. Hence $M$ is a complement to $K$ in $X$. Thus $M \cong X/K$ is a Frobenius group. We consider the subgraph of $\Gamma(X)$ spanned by the elements of $J^\#$ and claim that this is disconnected from $X\setminus J$. So let $j \in J^\#$ and consider $C_X(j)$. We have $C_M(j) \le J$ and $C_L(j) \le J$  as $L$ and $M$ are Frobenius groups. Let $x \in C_X(j)$. Then $x= mk$ for some $m \in M$ and $k \in K^\#$. Thus $j^x= j$ and this means  $j^m = j^{k^{-1}}$. Hence $j^m \in M$ and $j^{k^{-1}} \in L$. Therefore $$j^m =j^{k^{-1}}\in L \cap M=  JK \cap M= (K \cap M)J= J. $$
Since $M$ is a Frobenius group with complement $J$, $m \in J$ and, as $L$ is a Frobenius group with complement $J$, $k^{{-1}}\in J$. But then $x \in J$ and so  $C_X(j) \le J$. This proves our claim.
\end{proof}

The remainder of this section is devoted to the proof of Theorem~\ref{1}. Suppose that $G$ is a finite soluble group with $Z(G)=1$. Set $\Gamma= \Gamma(G)$. Because $G$ has trivial centre, $G^\#= G\setminus\{1\}$ is the vertex set of $\Gamma(G)$. Note that the next two lemmas apply to arbitrary finite groups.

\begin{lemma}\label{L1}  Assume that $N$ is a minimal normal subgroup of $G$ and set $F= C_G(N)$.  If  $a,b \in G^\#$ and $\d(a,b) > 4$, then either $C_G(a) \cap F = 1$ or $C_G(b) \cap F=1$.
\end{lemma}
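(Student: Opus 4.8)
The plan is to argue by contradiction: suppose $\mathrm{d}(a,b) > 4$ yet there exist $1 \ne x \in C_G(a) \cap F$ and $1 \ne y \in C_G(b) \cap F$. Since $F = C_G(N)$ is normal in $G$ and $x, y \in F$, I want to use $N$ together with $x$ and $y$ to manufacture a short path from $a$ to $b$ in $\Gamma(G)$, contradicting $\mathrm{d}(a,b) > 4$. The point is that $x$ and $y$ lie in $F$, so they both commute with every element of $N$; the element $N$ being a \emph{minimal} normal subgroup (hence elementary abelian, as $G$ is soluble — though minimality alone gives us what we need) means $N$ itself is abelian, so any single $1 \ne n \in N$ is a vertex commuting with all of $N$.

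First I would record the chain $a \sim x$, then note $x$ commutes with every $n \in N^\#$ (since $x \in F = C_G(N)$), and symmetrically $b \sim y$ with $y$ commuting with every $n \in N^\#$. If I can pick $n \in N^\#$ with $x \sim n$ and $y \sim n$ — which is automatic once $N \ne 1$ and we just take any nontrivial $n$ — then $a \sim x \sim n \sim y \sim b$ is a walk of length $4$ in $\Gamma(G)$, giving $\mathrm{d}(a,b) \le 4$, the desired contradiction. The only subtlety is making sure each listed element is actually a \emph{vertex}, i.e. non-central; but $a, b, x, y$ are non-identity by hypothesis and $Z(G) = 1$ is not even needed here since the lemma is stated for arbitrary finite groups — wait, for arbitrary $G$ one must instead check $x, y, n \notin Z(G)$. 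Here is where a little care is required: if, say, $x \in Z(G)$ then $x$ is not a vertex and the path breaks. So the real content is to show we may choose $x, y \in (C_G(a) \cap F) \setminus Z(G)$ and $n \in N \setminus Z(G)$, or else handle the degenerate cases directly.

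The main obstacle will therefore be the centrality bookkeeping: ruling out the possibility that $C_G(a) \cap F$ or $N$ collapses into $Z(G)$. For $N$: if $N \le Z(G)$ then $F = C_G(N) = G$, and then $C_G(a) \cap F = C_G(a) \ni a \ne 1$; if $a \notin Z(G)$ we are fine, and if $a \in Z(G)$ then $a$ is not a vertex, contradicting that $a \in G^\#$ is a vertex of $\Gamma(G)$ — so in the setting where $a, b$ are genuine vertices this case is vacuous or trivial. For $C_G(a) \cap F$: if every nontrivial element of it lies in $Z(G)$, pick such an element $x$; then $x \in Z(G)$ means $x$ commutes with all of $G$, in particular $a \sim_{\text{commute}} b$ directly but $x$ fails to be a vertex — so instead I would observe $C_G(a) \cap F \not\le Z(G)$ unless $C_G(a) \cap F = 1$ exactly (using $Z(G) \cap$ (a normal subgroup)... ). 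Cleanest route: prove the contrapositive \emph{assuming} $C_G(a) \cap F \ne 1$ and $C_G(b) \cap F \ne 1$ \emph{and} that these intersections are not contained in $Z(G)$, then separately dispense with "$N \le Z(G)$" and "$C_G(a)\cap F \le Z(G)$" by noting each forces one of the two intersections to be trivial after all, or forces $a$ or $b$ out of the vertex set. I expect the bulk of the write-up to be exactly this short case analysis, with the geometric heart — the length-$4$ walk $a \sim x \sim n \sim y \sim b$ through a nontrivial element $n$ of the minimal normal subgroup $N$ — occupying only a line.
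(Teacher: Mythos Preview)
Your core argument is exactly the paper's: assuming both $C_G(a)\cap F$ and $C_G(b)\cap F$ are nontrivial, pick $c\in (C_G(a)\cap F)^\#$, $d\in (C_G(b)\cap F)^\#$, and any $n\in N^\#$; since $c,d\in F=C_G(N)$ both commute with $n$, and one obtains the length-$4$ walk $a\sim c\sim n\sim d\sim b$, contradicting $\mathrm d(a,b)>4$. That is the entire proof in the paper.

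All of your centrality bookkeeping is unnecessary. The standing hypothesis announced at the start of Section~3 is $Z(G)=1$, so $G^\#$ is literally the vertex set of $\Gamma(G)$ and every nontrivial element is automatically a vertex. The paper's remark that ``the next two lemmas apply to arbitrary finite groups'' means only that \emph{solubility} is not used in their proofs; the hypothesis $Z(G)=1$ remains in force. Consequently there is no need to worry about whether $c$, $d$, or $n$ might lie in $Z(G)$, and the degenerate cases you plan to analyse (e.g.\ $N\le Z(G)$ or $C_G(a)\cap F\le Z(G)$) simply do not arise. Strip that discussion and your write-up collapses to the paper's two-line argument.
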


\begin{proof}  If $C_G(a) \cap F \ne  1$ and $C_G(b) \cap F\ne 1$. Then there exist $n \in N^\#$ and $c, d \in F^\#$ such that $$a\sim c \sim n \sim d\sim b,$$ which is a contradiction as $d(a,b) > 4$.
\end{proof}

The next lemma more-or-less explains why the structure of   groups with a large diameter commuting graph have very uncomplicated and restricted structure.

\begin{lemma}\label{L2} Suppose that the diameter of $\Gamma$ is at least $7$. If $N$ is a minimal normal subgroup of $G$, then $C_G(N) = F(G)$.

\end{lemma}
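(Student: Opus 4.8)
The plan is to show that a minimal normal subgroup $N$ of the soluble group $G$ satisfies $C_G(N) = F(G)$ whenever $\mathrm{diam}(\Gamma) \ge 7$. Since $G$ is soluble, $N$ is an elementary abelian $p$-group for some prime $p$, so $N \le C_G(N) =: F$, and $N \le F(G)$ as well. The containment $F(G) \le F$ also holds automatically: $N$ is a minimal normal subgroup, so $[N, F(G)] $ is a normal subgroup of $G$ contained in $N$, and since $F(G)$ is nilpotent the lower central series forces $[N,F(G)] < N$, whence $[N,F(G)] = 1$, i.e. $F(G) \le C_G(N) = F$. Thus the real content is the reverse inequality $F \le F(G)$, equivalently that $F = C_G(N)$ is nilpotent.

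First I would record that $N \le Z(F)$ (since $N$ is abelian and centralised by $F$), so $N$ is a normal abelian subgroup of $F$ and, because $F \trianglelefteq G$, we have $F(F) = F(G) \cap F = F(G)$ using the containment just established together with $F(G) \le F$; so it suffices to prove $F$ is nilpotent, i.e. $F = F(F)$. Suppose for contradiction that $F$ is not nilpotent. The strategy is to use the largeness of the diameter via Lemma~\ref{L1}: for any $a, b$ with $\d(a,b) > 4$ — and such a pair exists since $\mathrm{diam}(\Gamma) \ge 7 > 5$ — at least one of $C_G(a) \cap F$, $C_G(b) \cap F$ is trivial. I would pick a geodesic of length $\ge 7$ and look at its interior vertices; an interior vertex $v$ at distance $\ge 5$ from one endpoint and $\ge 2$ from another vertex far enough away should be forced to have $C_G(v) \cap F = 1$. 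The key point to extract is: there are at least two vertices $u, v$ with $\d(u,v)$ large (say $> 4$) and with, additionally, some third vertex at distance $> 4$ from each, so that in fact we can conclude $C_G(v) \cap F = 1$ for \emph{every} vertex $v$ lying ``in the middle'' — more carefully, since the diameter is $\ge 7$, one can find $a,b$ with $\d(a,b) \ge 7$, and then for any vertex $v$ one of $\d(a,v), \d(b,v)$ exceeds $4$ (they sum to at least $7$), so by Lemma~\ref{L1} $C_G(v) \cap F = 1$ unless $C_G(a) \cap F = 1$ (resp.\ $C_G(b) \cap F = 1$) — and we may also arrange, by possibly replacing $a$ or $b$, that neither of those holds, or handle that degenerate case separately.

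Here is the cleaner line I would actually pursue. Either $C_G(x) \cap F \ne 1$ for \emph{all} $x \in G^\#$, or not. If $C_G(x) \cap F = 1$ for some $x$, then $x$ acts on $F$ with $C_F(x) = 1$; since $F \trianglelefteq G$ and $F$ has the abelian normal subgroup $N$ with $x$ acting fixed-point-freely, and $\la x \ra$ may be taken of prime order, Thompson's theorem (Theorem~\ref{T}) applied appropriately — or rather a fixed-point-free-coprime-action argument — would force structure on $F$; in particular if $|x|$ is prime and coprime to $|F|$ then $F$ is nilpotent by Thompson and we are done, and the non-coprime case has to be massaged. In the remaining case $C_G(x) \cap F \ne 1$ for all $x$, then by Lemma~\ref{L1} applied to any pair at distance $> 4$ we get an immediate contradiction with $\mathrm{diam}(\Gamma) \ge 7$, since such a pair exists. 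So the crux is the case where $F$ is centralised by no involution-or-prime-order element trivially, and I expect the main obstacle to be the non-coprime fixed-point-free action: showing that if $x \in G^\#$ has $C_F(x) = 1$ then $F$ is nilpotent, when $|x|$ need not be coprime to $|F|$. I would address this by taking $x$ of prime order $q$; if $q \nmid |F|$, Thompson gives nilpotence of $F$ directly; if $q \mid |F|$, then $\la x \ra$ can be conjugated into a Sylow $q$-subgroup of $F$, but $C_F(x) \ni x \ne 1$, contradicting $C_F(x) = 1$ — so in fact $q \nmid |F|$ is automatic, and this obstacle evaporates. Assembling: either we reach the contradiction with the diameter, or we obtain $C_F(x) = 1$ for some prime-order $x$ necessarily of order coprime to $|F|$, whence $F$ is nilpotent by Thompson, giving $F = F(F) = F(G)$ as required.

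One more subtlety I would flag: to invoke Lemma~\ref{L1} I need $F = C_G(N)$ in its statement, which matches, and I need an actual pair $a,b$ with $\d(a,b) > 4$ — guaranteed since $\mathrm{diam}(\Gamma) \ge 7$ means some pair is at distance exactly $\ge 7 > 4$. The potential gap is whether ``$C_G(x)\cap F\ne 1$ for all $x$'' really does contradict the diameter bound: Lemma~\ref{L1} says for a \emph{fixed} far pair $a,b$, one of $C_G(a)\cap F$, $C_G(b)\cap F$ is trivial, so if \emph{all} such intersections are nontrivial we contradict the existence of a far pair — this is exactly right. I therefore expect the proof to be short modulo correctly citing Thompson's theorem and the trivial-Sylow-intersection observation, with the bulk of the care going into the dichotomy on whether some element centralises $N$-complement trivially and the verification that such an element must have order coprime to $|F|$.
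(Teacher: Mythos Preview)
Your overall strategy---show $F=C_G(N)$ is nilpotent by producing a prime-order element acting fixed-point-freely on it and then invoking Thompson's Theorem~\ref{T}---is exactly the paper's. But the execution has a genuine gap and one red herring.

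The gap is in your dichotomy. You split on whether $C_G(x)\cap F\ne 1$ for \emph{all} $x\in G^\#$; in the ``some $x$ has $C_F(x)=1$'' branch you then assert that ``$\langle x\rangle$ may be taken of prime order''. This is not justified, and it can fail: a fixed-point-free element need not have any fixed-point-free prime-order power. For instance, the automorphism of $\mathbb Z_3\times\mathbb Z_5$ of order $4$ that inverts the first factor and acts by a primitive fourth root on the second is fixed-point-free, yet its square (the unique prime-order element in its cyclic group) fixes $\mathbb Z_3$. The paper sidesteps this cleanly: it takes $c,d$ with $\d(c,d)>6$, passes to prime-order $a\in\langle c\rangle$, $b\in\langle d\rangle$, and uses $a\sim c$, $b\sim d$ to get $\d(a,b)\ge \d(c,d)-2>4$. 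Now Lemma~\ref{L1} applies to $a,b$ directly and hands you a \emph{prime-order} element with trivial centralizer in $F$. This is exactly why the hypothesis is diameter $\ge 7$ rather than $\ge 5$. Your argument is easily repaired along the same lines: run the dichotomy over prime-order elements only, and in the ``all nontrivial'' case observe that for arbitrary $a,b$ one may pass to prime-order powers $a',b'$, apply Lemma~\ref{L1} to those, and obtain $\d(a,b)\le \d(a',b')+2\le 6$, still contradicting diameter $\ge 7$.

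The coprimality discussion is a red herring, and the argument you give for it is wrong. Thompson's theorem as stated in Theorem~\ref{T} needs only that the fixed-point-free automorphism have prime order; there is no coprimality hypothesis. Moreover, $q\mid |F|$ does \emph{not} force $\langle x\rangle$ to be conjugate into a Sylow $q$-subgroup of $F$. All that is needed is the trivial observation that $C_F(x)=1$ forces $x\notin F$ (else $x\in C_F(x)$), so conjugation by $x$ is a bona fide fixed-point-free automorphism of $F$ of prime order, and Theorem~\ref{T} gives nilpotence of $F$ immediately.
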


\begin{proof} Set $F= C_G(N)$ and let $c,d \in G^\#$ be such that $\d(c,d)>6$. Since $F(G)$ is nilpotent, $F(G) \le F$. Let $a \in \langle c \rangle$ and $b \in \langle d \rangle$ have prime order.
Because $\d(c,d) > 6$, $\d(a,b) > 4$ and so  Lemma~\ref{L1} gives $C_F(a)=C_G(a) \cap F = 1$ or $C_F(b)=C_G(b) \cap F=1$.  Therefore Thompson's Theorem~\ref{T} shows $F$ is nilpotent. Since $F$ is normal in $G$, $F \le F(G)$ and so $F=F(G)$, as claimed.
\end{proof}

 From now on we fix  $F= F(G)$.  Let $J$ be the preimage of $F(G/F)$,  $Z$ be the preimage of $Z(J/F)$. We also let $V$ be a minimal normal subgroup of $G$. Thus $V \le F$ and  $F= C_G(V)$ by Lemma~\ref{L2}.

\begin{lemma}\label{L6}  Suppose that the diameter of $\Gamma$ is at least $7$. Then  $|J/F|$ is coprime to $|F|$.
\end{lemma}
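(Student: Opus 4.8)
The plan is to argue by contradiction: suppose some prime $p$ divides both $|F|$ and $|J/F|$. Since $J/F = F(G/F)$ is nilpotent, it has a characteristic (hence normal in $G/F$) Sylow $p$-subgroup $P/F$, so $P \trianglelefteq G$ and $P/F$ is a nontrivial $p$-group. The key idea is that such a $P$ forces the existence of a commuting path of length at most $6$ between \emph{any} two prime-order elements of $G$, contradicting the hypothesis that $\mathrm{diam}(\Gamma) \geq 7$ (via the reduction in the proof of Lemma~\ref{L2}, where $\d(c,d)>6$ produces prime-order $a \in \la c\ra$, $b \in \la d\ra$ with $\d(a,b)>4$).

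First I would locate a useful element centralizing a chunk of $F$. Recall $V \le F$ is a minimal normal subgroup with $F = C_G(V)$, so $V$ is an elementary abelian $\ell$-group for some prime $\ell$, and $G/F$ acts faithfully and irreducibly on $V$. Pick a minimal normal subgroup $W/F$ of $G/F$ contained in the $p$-group $P/F$; then $W/F$ is elementary abelian $p$. Split into two cases according to whether $p = \ell$ or $p \neq \ell$. If $p \neq \ell$, then by coprime action $W = V_0 \times (W \cap O_{p}(F)\text{-part})$... more precisely, $W$ acts on the abelian $\ell$-group $O_\ell(F)$ (which contains $V$), and since $W/F$ is a $p$-group acting coprimely, I would use Lemma~\ref{fps} (or elementary coprime-action fixed-point arguments) to produce a nontrivial element of $V$ (or of $O_\ell(F)$) centralized by a nontrivial element $w$ of $W$; then $w$ centralizes something in $F$, and also $w \in W \le J$ so $w$ has small centralizer structure we can exploit. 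If $p = \ell$, then $V \le O_p(F)$ and $W/F$ is a normal $p$-subgroup of $G/F$; here I would show $W$ is itself nilpotent — indeed $W$ is a $p$-group extension, and if $W$ is a $p$-group then $W \le F$, contradiction, so we instead get an element of $W \setminus F$ of order $p$ centralizing $V \cap Z(W) \neq 1$.

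In all cases the outcome is: there is $z \in J \setminus F$ of prime order and $1 \neq v \in F$ with $z \sim v$. Now the punchline uses Lemma~\ref{L1}/Lemma~\ref{L2} machinery: for any prime-order $a, b \in G^\#$, Lemma~\ref{L1} (with $N = V$, $F = C_G(V)$) says $C_F(a) = 1$ or $C_F(b) = 1$ whenever $\d(a,b) > 4$; but I will instead show directly that every prime-order element is within distance $3$ of $v$. Since $V = C_G(V) \cap \text{(min. normal)}$ and $F(G) = C_G(V)$, a standard fact for soluble $G$ with $Z(G)=1$ is that $C_G(F(G)) \le F(G)$, so $C_G(F) = Z(F)$; thus every $g \in G^\#$ with $C_G(g) \cap F \neq 1$ is joined through $F$, and the existence of the $z \sim v$ edge with $z \notin F$ lets us bridge an element $a$ with $C_F(a) \neq 1$ to an element $b$ with $C_F(b) = 1$ via $a \sim (\text{elt of }C_F(a)) \sim v \sim z \sim b$ once we check $z \sim b$ — which is where I would need $C_J(F) $ considerations or replace $z$ by a conjugate. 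The hard part will be organizing these centralizer bridges so that the path length never exceeds $6$ no matter how $a$ and $b$ relate to $F$; I expect the genuine obstacle is the case $a, b$ both with trivial $F$-centralizer, where one must use that $J/F$ itself, being nilpotent with a normal noncyclic-or-not $p$-part interacting with $F$, supplies the connecting vertex, and then carefully count edges in $a \sim \cdots \sim v \sim \cdots \sim b$.
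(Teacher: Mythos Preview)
Your approach has a genuine gap and is far more complicated than necessary. You are trying to manufacture short paths directly, but the bridge you sketch, namely $a \sim f \sim v \sim z \sim b$ with $z \in J\setminus F$ fixed, requires $z \sim b$, and there is no reason a \emph{particular} element $z$ should commute with an arbitrary prime-order $b$ satisfying $C_F(b)=1$. Your closing paragraph acknowledges exactly this obstruction (both $a$ and $b$ with trivial $F$-centralizer) without resolving it, and the vague appeal to ``$C_J(F)$ considerations or replace $z$ by a conjugate'' does not help: conjugating $z$ need not land it in $C_G(b)$ either.

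The idea you are missing is that Lemma~\ref{L2} already does all the work, provided you feed it the right minimal normal subgroup. Let $s$ be a prime dividing $|F|$ and take $S\in\Syl_s(J)$. Since $O_s(F)$ is a nontrivial normal subgroup of the $s$-group $S$, we have $Z(S)\cap O_s(F)\neq 1$; any such element is centralized by $S$ and by $O_{s'}(F)$ (the latter because $F$ is nilpotent), hence $Z(SF)\neq 1$. As $SF/F$ is the Sylow $s$-subgroup of the nilpotent group $J/F$, $SF$ is normal in $G$, so $Z(SF)$ contains a minimal normal subgroup $N$ of $G$. Now Lemma~\ref{L2} gives $SF\le C_G(N)=F$, whence $S\le F$ and $s\nmid |J/F|$. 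No path-building is needed at all.
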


\begin{proof} Assume that $s$ is a prime which divides $|F|$ and let $S \in \syl_s(J)$. Then $Z(SF) \ne 1$ and, as $SF$ is normal in $G$,  there exists a minimal normal subgroup $N$ of $G$ contained in $Z(SF)$. However, by Lemma~\ref{L2}, $F= C_G(N)$  thus $S  \le F$ and this proves the lemma.
\end{proof}

 For the remainder of this section, aiming to prove Theorem~\ref{1} by contradiction, we assume that \begin{quote}\emph{$G$ is not a Frobenius group or a $2$-Frobenius group and the diameter of $\Gamma$ is greater than $8$.  }\end{quote}

 Let $x, y \in G^\#$ be such that $\d(x,y)>8$. Note that there may be no path between $x$ and $y$ as we have not assumed that $\Gamma$ is connected.
 Let $x_r$ be an element of prime order $r$ in $\langle x\rangle$ and $y_s $ be an element of prime order $s$ in $\langle y\rangle$. Furthermore,  suppose that $r$ and $s$ are chosen to be maximal. Obviously $\d(x_r,y_s)> 6$.

\begin{lemma}\label{L4}  Either all elements of $C_G(x_r)^\#$ or all elements of $C_G(y_s)^\#$ act fixed-point-freely on $F$.
\end{lemma}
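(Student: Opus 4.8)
The plan is to argue by contradiction, exploiting the inequality $\d(x_r,y_s)>6$ recorded immediately before the statement, together with the fact that $F=F(G)$, being the Fitting subgroup of a non-trivial soluble group, is a non-trivial nilpotent group and hence satisfies $Z(F)\ne 1$.

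Suppose the assertion fails. Then there exist $g\in C_G(x_r)^\#$ with $C_F(g)\ne 1$ and $h\in C_G(y_s)^\#$ with $C_F(h)\ne 1$; here ``acts fixed-point-freely on $F$'' is read as $C_F(\cdot)=1$. Choose $f\in C_F(g)^\#$, $f'\in C_F(h)^\#$ and, using $Z(F)\ne 1$, choose $z\in Z(F)^\#$. Since $Z(G)=1$, every non-identity element of $G$ is a vertex of $\Gamma$, so $x_r$, $g$, $f$, $z$, $f'$, $h$, $y_s$ all lie in $G^\#$.

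Now exhibit a walk in $\Gamma$ of length $6$: $x_r\sim g$ because $g\in C_G(x_r)$; $g\sim f$ because $f\in C_F(g)$; $f\sim z$ and $z\sim f'$ because $f,f'\in F$ and $z\in Z(F)$; $f'\sim h$ because $f'\in C_F(h)$; and $h\sim y_s$ because $h\in C_G(y_s)$. The convention that $v\sim v$ for $v\in G^\#$ makes this walk legitimate even if some of the chosen elements coincide. Hence $\d(x_r,y_s)\le 6$, contradicting $\d(x_r,y_s)>6$. Therefore one of $C_G(x_r)^\#$, $C_G(y_s)^\#$ consists entirely of elements acting fixed-point-freely on $F$, as claimed.

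There is no real obstacle here beyond the bookkeeping: the only inputs are $\d(x_r,y_s)>6$, the non-triviality and nilpotency of $F=F(G)$ (so that $Z(F)\ne 1$), and $Z(G)=1$ (so that the chosen group elements are genuine vertices), all of which are available at this point. The only subtleties worth flagging are the possibility that $g$, $h$ or the two $f$'s coincide with one another or with $x_r$ or $y_s$, which is harmless because of the $v\sim v$ convention, and the need to record explicitly that $F\ne 1$ so that $Z(F)\ne 1$.
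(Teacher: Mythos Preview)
Your proof is correct and follows essentially the same route as the paper's: assume both centralizers contain elements with non-trivial fixed points in $F$, then link $x_r$ and $y_s$ by a short path passing through a central element of $F$, contradicting the distance bound. The only cosmetic differences are that the paper uses an element of the minimal normal subgroup $V$ (which lies in $Z(F)$ since $F=C_G(V)$) where you use $Z(F)$ directly, and the paper phrases the contradiction as a path of length $8$ from $x$ to $y$ rather than one of length $6$ from $x_r$ to $y_s$.
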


\begin{proof} Assume the statement is false and let $f \in C_F(a)^\#$ and $g \in C_F(b)^\#$ where $a \in C_G(x_r)^\#$ and $b \in C_G(y_s)^\#$. Then there exists $v \in V^\#$ such that $$x\sim x_r \sim a\sim f \sim v  \sim g \sim b \sim y_s \sim y,$$ which contradicts $\d(x,y)>8$.
\end{proof}

Because of Lemma~\ref{L4} we may, without loss of generality, assume that every element of $C_G(x_r)^\#$ acts fixed-point-freely on $F$.  Lemma~\ref{Frob} gives the following   immediate consequence of this choice.

\begin{lemma}\label{L5}
We have $C_G(x_r)F$ is a Frobenius group.  In particular, $G \ne C_G(x_r)F$. \qed
\end{lemma}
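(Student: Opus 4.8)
The plan is to apply Lemma~\ref{Frob} with $X = C_G(x_r)F$, taking $J = F$ and $K = C_G(x_r)$. First I would record that this really is a Frobenius-type decomposition. Since $F = F(G)$ is normal in $G$, the product $C_G(x_r)F$ is a subgroup in which $F$ is normal. Now $x_r$ itself belongs to $C_G(x_r)^\#$, so by our assumption it acts fixed-point-freely on $F$; hence $C_F(x_r) = 1$ and therefore $C_G(x_r) \cap F = 1$. This makes $C_G(x_r)$ a complement to $F$ in $C_G(x_r)F$, and since $x_r \notin F$ the normal subgroup $F$ is proper.

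The only thing left is to check the hypothesis of Lemma~\ref{Frob}, namely that $C_{C_G(x_r)F}(k) \le C_G(x_r)$ for every $k \in C_G(x_r)^\#$. Fix such a $k$ and take $w \in C_{C_G(x_r)F}(k)$; write $w = cf$ with $c \in C_G(x_r)$ and $f \in F$. From $k = k^w = (k^c)^f$ and $k^c \in C_G(x_r)$ we obtain $(k^c)^{-1}k = [k^c, f] \in C_G(x_r) \cap F = 1$, so $k^c = k = k^f$. Thus $f \in C_F(k)$, and this is trivial because $k \in C_G(x_r)^\#$ acts fixed-point-freely on $F$ by our assumption. Hence $f = 1$, so $w = c \in C_G(x_r)$, and Lemma~\ref{Frob} now gives that $C_G(x_r)F$ is a Frobenius group.

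For the ``in particular'' clause, if we had $G = C_G(x_r)F$ then $G$ itself would be a Frobenius group, contradicting the standing assumption that $G$ is neither a Frobenius nor a $2$-Frobenius group. There is no real obstacle in this argument: the only point that needs a moment's attention is that $x_r$ must be regarded as a non-identity element of its own centraliser, so that the fixed-point-free hypothesis applies to $x_r$ itself and forces $C_G(x_r) \cap F = 1$.
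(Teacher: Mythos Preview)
Your proof is correct and follows exactly the route the paper intends: the paper simply states that Lemma~\ref{Frob} gives Lemma~\ref{L5} as an immediate consequence of the standing assumption that every element of $C_G(x_r)^\#$ acts fixed-point-freely on $F$, and you have written out precisely the verification needed to invoke Lemma~\ref{Frob} with $J=F$ and $K=C_G(x_r)$. The ``in particular'' clause is handled identically via the standing hypothesis that $G$ is not a Frobenius group.
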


The next two lemmas could easily be combined into one for our forthcoming arguments; however, the separation of them more transparently indicates  the structure of  groups which are likely to provide extremal examples of large diameter commuting graphs and explains why in our examples constructed in Section~4 we have required $C_G(x_r)$ to have odd order.

\begin{lemma}\label{Cxrodd} The subgroup $C_G(x_r)$ has odd order and  is metacyclic.
\end{lemma}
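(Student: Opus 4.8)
The plan is to exploit Lemma~\ref{L5} together with the structure theory of Frobenius complements (Lemma~\ref{meta}). Since every element of $C_G(x_r)^\#$ acts fixed-point-freely on $F$, Lemma~\ref{L5} tells us that $C_G(x_r)F$ is a Frobenius group with kernel $F$ and complement $C_G(x_r)$. Hence $C_G(x_r)$ is a Frobenius complement, and Lemma~\ref{meta} immediately gives that each of its Sylow subgroups is cyclic or generalized quaternion, and that if $C_G(x_r)$ has odd order then it is metacyclic. So the whole lemma reduces to the single claim that $|C_G(x_r)|$ is odd; the ``metacyclic'' conclusion then follows for free.

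To prove $|C_G(x_r)|$ is odd, I would argue by contradiction: suppose $2 \mid |C_G(x_r)|$. Then $C_G(x_r)$ contains an involution $t$, and since $C_G(x_r)$ is a Frobenius complement it has a unique involution (the Sylow $2$-subgroup being cyclic or generalized quaternion, hence with a unique involution, and by Lemma~\ref{meta} elements of prime order commute, forcing a single central involution $t$ in $C_G(x_r)$). The key leverage is the maximality built into the choice of $r$: recall $x_r$ has prime order $r$ in $\langle x \rangle$ with $r$ chosen \emph{maximal} among primes dividing $|\langle x\rangle|$ (and symmetrically for $s$). I would use this to compare $r$ with $2$ and to relate $\langle x \rangle$ to $C_G(x_r)$. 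In fact $x \in C_G(x_r)$, so $\langle x\rangle \le C_G(x_r)$; the maximality of $r$ should then force $r$ to be the largest prime dividing $|C_G(x_r)|$, and I expect to derive that $x_r$ itself must be (a power of) the relevant element, so that $t$ commutes with $x_r$. Now reconsider the distance estimate: if $t \in C_G(x_r)$ is an involution then, because every element of $C_G(x_r)^\#$ acts fixed-point-freely on $F$, $t$ inverts $F$ (a fixed-point-free automorphism of order $2$ of the nilpotent group $F$ inverts it), so $F$ is abelian. One then wants to run an analogue of the path argument in Lemma~\ref{L4}: having a ``universal'' involution $t$ that commutes with everything in $C_G(x_r)$ and also interacts with $F$ via its normalizer should create a short path from $x$ to $y$, contradicting $\d(x,y) > 8$. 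Concretely, I would look for $v \in V^\#$ and an involution $t' \in C_G(y_s)$ (if $C_G(y_s)$ also had even order) or otherwise use that $t$ lies in $C_G(F_0)$ for a suitable $t$-invariant piece, building a path $x \sim x_r \sim t \sim \cdots \sim v \sim \cdots \sim y_s \sim y$ of length at most $8$.

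The main obstacle I anticipate is precisely this last step: turning the existence of an involution in $C_G(x_r)$ into a genuinely short path between $x$ and $y$. The difficulty is that an involution inverting $F$ has no nontrivial fixed points on $F$, so one cannot naively route a path through $F$ via $t$; instead one must use that $t$ normalizes (and hence lies in, or centralizes something inside) a larger subgroup — perhaps one shows $t$ centralizes an element of $J/F$-type, or that the unique involution is actually central in a larger subgroup of $G$, forcing $t \in Z(G) = 1$, an outright contradiction. I would try first for the cleanest version: show the unique involution $t$ of $C_G(x_r)$ is centralized by $F$-conjugates enough to land in $Z(G)$, using that all $G$-conjugates of $C_G(x_r)$ are Frobenius complements over $F$ and that $t$ inverts $F$. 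If $t$ inverts $F$ and $V \le F$, then $C_V(t) = 1$; but also $t^g$ inverts $F$ for suitable $g$, and $tt^g$ would then centralize $F$, placing $tt^g \in C_G(F) = F$ by Lemma~\ref{L2} (applied with $N = V$), which constrains things sharply and should yield either $|C_G(x_r)|$ odd or a contradiction to $\d(x,y) > 8$. That is the delicate combinatorial-group-theoretic knot to untangle; everything else is a direct appeal to Lemmas~\ref{meta}, \ref{L5}, and the maximal choice of $r$.
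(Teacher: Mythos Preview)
Your setup is correct: $C_G(x_r)$ is a Frobenius complement, so by Lemma~\ref{meta} it suffices to prove $|C_G(x_r)|$ is odd; and if an involution $z$ exists it is unique, central in $C_G(x_r)$ (hence $x\sim z$), inverts $F$, and $F$ is abelian. From here, however, the proposal drifts.

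First, the maximality of $r$ is a red herring. In the paper's argument it plays no role in this lemma; you should drop that line of thought entirely.

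Second, your observation that $zz^g\in C_G(F)\le F$ is exactly the right one, but you stop short of the conclusion it yields: since every $G$-conjugate of $z$ inverts $F$ and $|F|$ is odd (an involution inverting $F$ forces this), the coset $Fz$ is a full $G$-conjugacy class of involutions and $G=C_G(z)F$. This equality is the pivot of the whole proof, and without it you cannot link $z$ to $y$.

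Third, you never invoke the standing hypothesis that $G$ is \emph{not} a Frobenius group. Once $G=C_G(z)F$ with $C_G(z)\cap F=1$, Lemma~\ref{Frob} forces the existence of $d\in C_G(z)^\#$ with $C_F(d)\neq 1$. This $d$ is your bridge into $F$; looking for a fixed point of $z$ itself on $F$ (there is none) or hoping $z\in Z(G)$ (it isn't) are dead ends.

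Finally, the path to $y$ goes as follows. If $C_F(y_s)\neq 1$ you get $x\sim z\sim d\sim f\sim f_1\sim y_s\sim y$ with $f\in C_F(d)^\#$, $f_1\in C_F(y_s)^\#$ (using $F$ abelian), contradicting $\d(x,y)>8$. Otherwise $C_F(y)\le C_F(y_s)=1$, and then the endomorphism $f\mapsto f(f^{-1})^y$ of the abelian group $F$ is injective, hence surjective; this means $y$ fixes some element of the coset $Fz$, i.e.\ $y$ centralizes $z^f$ for some $f\in F$. Since $C_F(d^f)=C_F(d)^f=C_F(d)$, one gets $x\sim z\sim d\sim f_1\sim d^f\sim z^f\sim y$ for $f_1\in C_F(d)^\#$, again a contradiction. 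This is the ``combinatorial knot'' you anticipated; the missing idea is precisely $G=C_G(z)F$ together with the non-Frobenius hypothesis.
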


\begin{proof} Suppose that $z \in C_G(x_r)$ is an involution. Then, as every non-trivial element of $C_G(x_r)$ acts fixed-point-freely on $F$, $z$ is the unique involution of $C_G(x_r)$ by Lemma~\ref{meta}.
 Therefore $F$ is inverted by $z$, $x$ centralizes $z$, $F$ is abelian and $G= C_G(z)F$.

As $G$ is not a Frobenius group, there exists $d \in C_G(z)^\#$ such that $C_F(d) \ne 1$ by   Lemma~\ref{Frob}. Assume $C_F(y_s) \ne 1$. Then we have $$x \sim z \sim d \sim f \sim f_1\sim y_s \sim y$$ where $f, f_1 \in F^\#$, a contradiction.
Hence $C_F(y_s)=1$ and $y$ centralizes   $z^f$ for some  $f \in F$. Observe $d^f \in C_G(z^f)$ and $C_F(d)= C_F(d)^f=C_F(d^f)$. Hence there exists $f_1 \in C_F(d)^\#$ such that $$x \sim z \sim d \sim f_1 \sim d^f \sim z^f \sim y$$ the final straw.    Hence $|C_G(x_r)|$ is odd and  the fact that $C_G(x_r)$ is metacyclic follows from Lemma~\ref{meta}. \end{proof}

The fact that $G$ is soluble plays a critical role in the next lemma.

\begin{lemma}\label{L7} We have $x_r \in Z$ and $Z/F$ is cyclic.
\end{lemma}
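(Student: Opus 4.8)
The plan is to work inside the soluble group $G/F$ and exploit the fact that, by Lemma~\ref{L5}, $C_G(x_r)F$ is a Frobenius group, so the image $\overline{C_G(x_r)}$ of $C_G(x_r)$ in $G/F$ acts fixed-point-freely on $F$ (equivalently, $C_G(x_r)\cap F = 1$ and no nontrivial element of $C_G(x_r)$ centralizes anything in $F$). First I would fix $\overline{X} = \langle \overline{x_r}\rangle \le G/F$, a subgroup of prime order $r$, and observe that $\overline{X}$ has a nontrivial centralizer in $F(G/F) = J/F$: indeed $\overline{X}$ normalizes the Sylow subgroups of $J/F$ it centralizes enough of, and more cleanly, since $J/F$ is nilpotent and normal in $G/F$, the action of $\overline{X}$ on $J/F$ must fix something nontrivial — here is where solubility enters, because I want to produce an element $\overline{w} \in C_{J/F}(\overline{x_r})^\#$. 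The point is that if $\overline{X}$ acted fixed-point-freely on $J/F$, then $J/F$ would be nilpotent (Thompson, Theorem~\ref{T}) which it already is, so that alone is not a contradiction; instead I would use Lemma~\ref{L6}, which tells us $|J/F|$ is coprime to $|F|$, together with coprime action to get $C_{J/F}(\overline{x_r}) = \overline{C_J(x_r)}$, and then argue that this centralizer is nontrivial by a Glauberman-type / coprime fixed-point argument on the nilpotent group $J/F$ under the action of $\langle x_r\rangle$.

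Next, the heart of the argument: I claim $\overline{x_r} \in Z(J/F)$. Suppose not. Then $\overline{x_r}$ does not centralize all of $J/F$, yet $x_r \in C_G(x_r)$ whose image acts fixed-point-freely on $F$. I would pick a minimal $\overline{x_r}$-invariant subgroup of $J/F$ on which $\overline{x_r}$ acts nontrivially, pull back to a subgroup $P F/F$ with $P$ an $\langle x_r\rangle$-invariant $p$-subgroup for some prime $p$, and use Lemma~\ref{fps}: with $V$ (our minimal normal subgroup of $G$, a faithful $\mathrm{GF}(\ell)$-module for $G/F$ for the relevant prime $\ell \mid |V|$) playing the role of the module, $\langle x_r\rangle$ of order $r$ acting on the $p$-group $P/(P\cap\ldots)$, the hypotheses of Lemma~\ref{fps} are met — checking the Fermat-prime/$p=2$ caveat will need the fact from Lemma~\ref{Cxrodd} that $C_G(x_r)$ has odd order, hence $r$ is odd — and we conclude $C_V(\langle x_r\rangle) \ne 0$. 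But any nontrivial element of $C_V(x_r)$ lies in $C_F(x_r)$, contradicting that $x_r$ (a nontrivial element of $C_G(x_r)$) acts fixed-point-freely on $F$. Hence $\overline{x_r}$ centralizes every such chief factor of $J/F$, so $\overline{x_r} \in Z(J/F)$, i.e. $x_r \in Z$.

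Finally, to show $Z/F$ is cyclic: we now know $\overline{x_r} \in Z(J/F)$, so $Z/F = Z(J/F)$ is abelian. I would show it is in fact cyclic by arguing it embeds in a Frobenius complement. The subgroup $Z F/F = Z/F$ acts on $F$; since $Z/F$ is coprime to $|F|$ (Lemma~\ref{L6}) and normal in $G/F$, and since — by an argument parallel to Lemma~\ref{L4}, pushing the distance bound — I expect every nontrivial element of $Z/F$ to act fixed-point-freely on $F$ (otherwise one builds a short path from $x$ to $y$ through $F$ and $V$, using that $x_r \in Z$ links $x$ to all of $Z$), the group $ZF$ is a Frobenius group with kernel $F$, so $Z/F$ is a Frobenius complement; being abelian it must be cyclic by Lemma~\ref{meta} (all Sylow subgroups cyclic or quaternion, but abelian rules out quaternion, and an abelian group with all Sylows cyclic is cyclic).

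\medskip

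The main obstacle I anticipate is the middle step — proving $\overline{x_r} \in Z(J/F)$ — and specifically setting up Lemma~\ref{fps} correctly: one must exhibit a configuration of three distinct primes $p, q, r$ (playing the roles of the characteristic of $V$, the prime for the $q$-group $P$, and $r = $ order of $\langle x_r\rangle$) and check these are genuinely distinct and that $V$ restricts to a faithful module for $\langle x_r\rangle P$. If $\overline{x_r}$ acts nontrivially only on $p$-factors with $p$ equal to the characteristic of $V$, the lemma does not directly apply, and one would instead need to invoke coprimality of $|J/F|$ and $|F|$ (Lemma~\ref{L6}) to reduce to the case where the prime $q$ dividing $|P|$ differs from the characteristic of $V$, and possibly vary the choice of the minimal normal subgroup $V$ among the primes dividing $|F|$. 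The odd-order conclusion of Lemma~\ref{Cxrodd} is exactly what removes the annoying Fermat-prime hypothesis in Lemma~\ref{fps}, so that piece should slot in cleanly.
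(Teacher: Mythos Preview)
Your outline has the right skeleton for the case where $x_r$ fails to centralize a Sylow $t$-subgroup of $J/F$ with $t\neq r$, but there is a genuine gap: you never handle the possibility $t=r$. If the Sylow $r$-subgroup $R$ of $J\langle x_r\rangle$ is not centralized by $x_r$, then Lemma~\ref{fps} is unavailable because the three primes are not distinct. The paper deals with this separately: take $x_r\in R$ and a central element $z_r\in Z(R)^\#$ of order $r$; then $\langle x_r,z_r\rangle$ is elementary abelian of order $r^2$, and since $r$ is coprime to $|F|$ (as $x_r$ acts fixed-point-freely on $F$), the standard coprime generation result \cite[5.3.16]{Gor} gives $V=\langle C_V(z)\mid z\in\langle x_r,z_r\rangle^\#\rangle$, so some nontrivial $z\in C_G(x_r)$ has $C_V(z)\neq 0$, contradicting the choice of $x_r$. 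Your ``main obstacle'' paragraph misidentifies the danger: you worry about the prime of $P$ coinciding with the characteristic of $V$, but Lemma~\ref{L6} already rules that out; the real collision to worry about is the prime of $P$ equalling $r$.

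Two smaller points. First, your treatment of the Fermat/$q=2$ exception is vague: saying ``$r$ is odd'' does not discharge the hypothesis of Lemma~\ref{fps}. What actually happens (and what the paper does) is that if $C_V(x_r)=0$ forces $T$ to be a $2$-group, then applying Lemma~\ref{fps} to each abelian characteristic subgroup of $T$ shows $x_r$ centralizes $Z(T)$; since $T\cap F=1$ this puts a nontrivial $2$-element into $C_G(x_r)$, contradicting Lemma~\ref{Cxrodd}. Second, for $Z/F$ cyclic you take a detour through a fresh Frobenius argument; it is more direct to observe that once $x_r\in Z$, a complement $Z_0$ to $F$ in $Z$ (which exists by Lemma~\ref{L6}) is centralized by $x_r$, hence $Z_0\le C_G(x_r)$, and then Lemma~\ref{meta} (via Lemma~\ref{Cxrodd}) forces the abelian group $Z_0\cong Z/F$ to be cyclic. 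Your first paragraph, attempting to show $C_{J/F}(\overline{x_r})\neq 1$, is not needed for the argument and can be dropped.
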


\begin{proof} Suppose that $x_r \not \in Z$. Then, as $J/F= F(G/F)$ and $G/F$ is soluble, there exists a prime $t$ such that the Sylow $t$-subgroup of $J/F$ is not centralized by $x_r$. Suppose first that $t= r$. Let $R \in \syl_r(J\langle x_r\rangle)$ with $x_r \in R$. Then $x_r\not\in Z(R)$. Let $z_r \in Z(R)^\#$ have order $r$. Then $\langle z_r, x_r \rangle$ is elementary abelian of order $r^2$ and, as $r$ is coprime to $|F|$, by \cite[Theorem 5.3.16]{Gor},  $$V= \langle C_V(z)\mid z \in \langle z_r, x_r \rangle^\#\rangle$$ contrary to the choice of $x_r$ to have every element of $C_G(x_r)^\#$ acting fixed-point-freely on $F$. Therefore $t \ne r$. Choose  $ T \in \syl_t(J)$ to be $\langle x_r\rangle$ invariant. Since $C_V(x_r)=0$, noting that $T\langle x_r\rangle$ acts faithfully on $V$ by Lemma~\ref{L2}, Lemma~\ref{fps}  implies that $T$ is a $2$-group and that every abelian characteristic subgroup of $T$ is centralized by $x_r$. In particular, $Z(T)$ is centralized by $x_r$. Thus $C_G(x_r)$ has even order which is contrary to Lemma~\ref{Cxrodd}. Thus $x_r$ centralizes $J/F$ and so $x_r \in Z$. Finally as some complement to $Z$ in $F$ is contained in $C_G(x_r)$ and all abelian subgroups of $C_G(x_r)$ are cyclic by Lemma~\ref{meta}, $Z/F$ is cyclic.
\end{proof}

\begin{lemma}\label{L11} We have  $G= N_G(\langle x_r\rangle)F$, $N_G(\langle x_r\rangle) \cap F=1$ and $G/C_G(x_r)F$ is cyclic of order dividing $r-1$.
\end{lemma}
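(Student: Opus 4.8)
The plan is to exploit the structure already established: $x_r \in Z$ and $Z/F$ is cyclic (Lemma~\ref{L7}), $F = C_G(V)$ with $V$ a faithful module for $G/F$ (Lemma~\ref{L2}), and $|J/F|$ is coprime to $|F|$ (Lemma~\ref{L6}). First I would locate $\langle x_r\rangle$ inside a complement: since $r$ is coprime to $|F|$ by Lemma~\ref{L6} (note $r$ divides $|J/F|$ because $x_r\in Z\le J$ and $x_rF$ has order $r$, as $x_r$ acts fixed-point-freely hence non-trivially on $F$), the Schur--Zassenhaus theorem gives a complement $H$ to $F$ in $HF$ for any subgroup containing $F$; more to the point, applying coprime action I want a conjugate of $\langle x_r\rangle$ to sit inside a fixed complement, and conversely to understand $N_G(\langle x_r\rangle)$. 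The cleanest route: let $P\in\syl_r(Z)$ with $x_r\in P$; since $Z/F$ is cyclic, $P$ is cyclic, so $\langle x_r\rangle$ is characteristic in $P$ (being the unique subgroup of its order), hence $N_G(P)\le N_G(\langle x_r\rangle)$. By the Frattini argument applied to $Z\trianglelefteq G$ and $P\in\syl_r(Z)$, $G = N_G(P)F \le N_G(\langle x_r\rangle)F$, giving the first assertion $G = N_G(\langle x_r\rangle)F$.

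Next I would prove $N_G(\langle x_r\rangle)\cap F = 1$. Set $D = N_G(\langle x_r\rangle)\cap F$, a normal subgroup of $N_G(\langle x_r\rangle)$ contained in $F$. Since $x_r$ acts on $\langle x_r\rangle$ trivially and $D$ normalizes $\langle x_r\rangle$ with $(|D|,r)=1$, coprime action on the cyclic $r$-group $\langle x_r\rangle$ forces $D$ to centralize $\langle x_r\rangle$; thus $D\le C_F(x_r)$. But every non-trivial element of $C_G(x_r)$ acts fixed-point-freely on $F$, and $C_F(x_r)\le C_G(x_r)$ consists of elements fixing all of $F$ pointwise (they lie in $F$ and centralize... wait, more carefully: an element $f\in C_F(x_r)^\#$ lies in $C_G(x_r)^\#$, so it acts fixed-point-freely on $F$, yet $f\in F$ so $f$ fixes $f$ itself — contradiction). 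Hence $C_F(x_r)=1$, so $D=1$, establishing $N_G(\langle x_r\rangle)\cap F = 1$; combined with $G = N_G(\langle x_r\rangle)F$ this shows $N_G(\langle x_r\rangle)$ is a complement to $F$ in $G$.

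Finally, for the quotient $G/C_G(x_r)F$: by the first two parts $G = N_G(\langle x_r\rangle)F$ with $N_G(\langle x_r\rangle)\cap F = 1$, so $G/F \cong N_G(\langle x_r\rangle)$ and $C_G(x_r)F/F \cong C_{N_G(\langle x_r\rangle)}(x_r) = C_G(x_r)\cap N_G(\langle x_r\rangle)$ is normal in $N_G(\langle x_r\rangle)$. The group $N_G(\langle x_r\rangle)$ acts on $\langle x_r\rangle\cong \mathbb Z_r$, and the kernel of this action is exactly $C_G(x_r)\cap N_G(\langle x_r\rangle)$; thus $N_G(\langle x_r\rangle)/(C_G(x_r)\cap N_G(\langle x_r\rangle))$ embeds in $\mathrm{Aut}(\mathbb Z_r)\cong \mathbb Z_{r-1}$, which is cyclic of order $r-1$. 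Transporting this through the isomorphism $G/F\cong N_G(\langle x_r\rangle)$ gives that $G/C_G(x_r)F$ is cyclic of order dividing $r-1$. The main obstacle I anticipate is the bookkeeping needed to pin down $\langle x_r\rangle$ as characteristic in a suitable Sylow or $\langle x_r\rangle$-invariant subgroup so that the Frattini argument delivers $G = N_G(\langle x_r\rangle)F$ cleanly; once that is in place, the coprimality facts from Lemmas~\ref{L6} and~\ref{Cxrodd} make the rest essentially formal.
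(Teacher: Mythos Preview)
Your overall strategy matches the paper's, but there is one genuine slip and one place where the paper's route is cleaner.

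\textbf{The slip (step two).} The sentence ``coprime action on the cyclic $r$-group $\langle x_r\rangle$ forces $D$ to centralize $\langle x_r\rangle$'' is false as stated: $\mathrm{Aut}(\langle x_r\rangle)$ has order $r-1$, so a group of order coprime to $r$ can certainly act non-trivially. What actually works (and is implicit in the paper's one-line ``As $F\cap C_G(x_r)=1$ and $F$ is normal in $G$, $N_G(\langle x_r\rangle)\cap F=1$'') is this: for $f\in N_G(\langle x_r\rangle)\cap F$ the commutator $[x_r,f]$ lies in $\langle x_r\rangle$ because $f$ normalizes $\langle x_r\rangle$, and in $F$ because $F\trianglelefteq G$; since $\langle x_r\rangle\cap F=1$ (as $r\nmid |F|$), we get $[x_r,f]=1$, so $f\in C_F(x_r)=1$. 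Equivalently, you had already observed that $D\trianglelefteq N_G(\langle x_r\rangle)$ and $x_r\in N_G(\langle x_r\rangle)$, so $D$ and $\langle x_r\rangle$ normalize each other and $[D,\langle x_r\rangle]\le D\cap\langle x_r\rangle=1$; that is the correct ``coprime'' argument, and your write-up should say this rather than invoke coprime action alone.

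\textbf{The cleaner route (step one).} Your Sylow/Frattini detour through $P\in\Syl_r(Z)$ works, but note that Frattini on $Z\trianglelefteq G$ gives $G=N_G(P)Z$, not $G=N_G(P)F$; you need a second (easy) step to get $Z\le N_G(P)F$. The paper bypasses this entirely: since $Z/F$ is cyclic, $\langle x_r\rangle F/F$ is its unique subgroup of order $r$, hence characteristic in $Z/F$, hence $\langle x_r\rangle F\trianglelefteq G$; then Frattini applied to $\langle x_r\rangle F$ directly yields $G=N_G(\langle x_r\rangle)F$. Your third step is the same as the paper's.
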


\begin{proof} Since, by Lemma~\ref{L7}, $Z/F$ is cyclic, $\langle x_r\rangle F$ is a normal subgroup of $G$. Hence $G= N_G(\langle x_r\rangle)F$ by the Frattini  Argument. As $F \cap C_G( x_r) =1$ and $F$ is normal in $G$,   $N_G(\langle x_r\rangle) \cap F=1$. Because $x_r$ has order $r$ and $\mathrm{Aut}(\langle x_r\rangle)$ is cyclic of order $r-1$, we obtain $G/C_G(x_r)F$ is cyclic of order dividing $r-1$.
\end{proof}

\begin{lemma}\label{L13}
There exists $d \in G^\#$, $c \in C_G(x_r)^\#$ and $v \in V^\#$  such that $$x_r\sim c \sim d \sim v.$$   Furthermore, $c$ can be chosen of prime order.\end{lemma}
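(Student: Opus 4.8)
The plan is to build the path inside $C_G(x_r)$ together with the normal subgroup $C_G(x_r)F$. Write $C=C_G(x_r)$ and $M=N_G(\langle x_r\rangle)$, and recall from Lemmas~\ref{L5}, \ref{Cxrodd}, \ref{L7} and \ref{L11} that $G=MF$, $M\cap F=1$, $\langle x_r\rangle\trianglelefteq M$, $C=C_M(x_r)\trianglelefteq M$, $M/C\cong G/CF$ is cyclic of order dividing $r-1$, $C$ is a metacyclic group of odd order whose nontrivial elements act fixed-point-freely on $F$ (hence on $V$), and $\langle x_r\rangle F\trianglelefteq G$ (as $Z/F$ is cyclic by Lemma~\ref{L7}). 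First I would record that $CF\trianglelefteq G$: since $\langle x_r\rangle F$ is a Frobenius group with kernel $F$, a short commutator computation shows $\{g\in G:[g,x_r]\in F\}=CF$, so $CF$ is the full preimage of the normal subgroup $C_{G/F}(x_rF)$ of $G/F$. Note also that $CF$ is a Frobenius group with kernel $F$ and $F<CF<G$, the last inequality being Lemma~\ref{L5}.

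Next I would use that $G$ is not a $2$-Frobenius group. If $G/F$ were a Frobenius group with kernel $CF/F$, then, as $CF$ is a Frobenius group with kernel $F$, $G$ would be a $2$-Frobenius group; so $G/F$ is not such a group, and Lemma~\ref{Frob2} supplies $c_1F\in(CF/F)^\#$ with $C_{G/F}(c_1F)\not\le CF/F$. Using the Frobenius structure of $CF$ I may conjugate $c_1$ by an element of $F$ and assume $c_1\in C^\#$; then $c_1$ acts fixed-point-freely on $F$, so $C_G(c_1)\cap F=1$, and a commutator argument (using $G=MF$ and $M\cap F=1$) gives $C_G(c_1)=C_M(c_1)$ and that the image of $C_G(c_1)$ in $G/F$ equals $C_{G/F}(c_1F)$; hence $C_M(c_1)\not\le C$. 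Replacing $c_1$ by an appropriate power (centralizers only grow) I may assume $c_1$ has prime order $p$, and $p\ne r$, since $c_1$ of order $r$ would lie in $\langle x_r\rangle$, forcing $C_M(c_1)=C_M(x_r)=C$. In particular $C$ is not an $r$-group.

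Put $H=C_G(c_1)=C_M(c_1)$. Then $x_r\in H$, $H$ embeds in $M\cong G/F$ and so acts faithfully on $V$ by Lemma~\ref{L2}, $\langle x_r\rangle\trianglelefteq H$, and $H\not\le C$. It now suffices to find $d\in H^\#$ with $C_V(d)\ne 0$: taking $v\in C_V(d)^\#$ then gives $x_r\sim c_1\sim d\sim v$ with $c_1$ of prime order, as required. Suppose, for a contradiction, that no such $d$ exists; then every nontrivial element of $H$ acts fixed-point-freely on $V$, so $H$ is a Frobenius complement, and $H$ has odd order (the unique involution of $H$, if any, would be central in $H$, hence would centralise $x_r$ and lie in $C$). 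Set $N=C_H(x_r)=H\cap C\trianglelefteq H$; then $H/N$ embeds in $\mathrm{Aut}(\langle x_r\rangle)$, so it is cyclic of order dividing $r-1$, and $H/N\ne 1$. By Lemma~\ref{meta} every element of $H$ of prime order commutes with $x_r$ and so lies in $N$. From here I would derive a contradiction by playing the cyclic structure of $H/N$ (bounded by $r-1$) against this fact: for a prime $q\mid|H/N|$ not dividing $|N|$, a generator of the cyclic Sylow $q$-subgroup of $H$ has an element of order $q$ acting nontrivially on $\langle x_r\rangle$, against Lemma~\ref{meta}; and the primes $q$ dividing both $|H/N|$ and $|N|$ should be eliminated by applying Lemma~\ref{fps} with $\langle x_r\rangle$ acting on a suitable $\langle x_r\rangle$-invariant $q$-subgroup of $H$ on which $x_r$ acts nontrivially — legitimate because $C_V(x_r)=0$ — forcing every such $q$ to equal $p$ and, tracking back, contradicting $H\not\le C$.

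I expect this last step to be the real obstacle: ruling out the possibility that $C_G(c_1)$ acts fixed-point-freely on $V$. The two reductions are routine once the normality of $CF$ is in hand; the delicate point is the interplay inside the Frobenius complement $C_G(c_1)$ between its cyclic quotient by $C_{C_G(c_1)}(x_r)$ and the prime divisors of $|C|$, where Lemma~\ref{fps} is the essential lever that turns $C_V(x_r)=0$ into strong constraints on the relevant $q$-subgroups. A possibly cleaner alternative would be to establish globally that if $C_G(c)$ acted fixed-point-freely on $V$ for every prime-order $c\in C^\#$ then $G$ would be a Frobenius or $2$-Frobenius group.
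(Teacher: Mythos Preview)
Your setup through the production of $c_1\in C^\#$ of prime order $p\ne r$ with $H=C_G(c_1)=C_M(c_1)\not\le C$ is correct and is a reasonable route into the problem. The genuine gap is exactly where you flag it: the final contradiction. Your plan is to find an $\langle x_r\rangle$-invariant $q$-subgroup of $H$ on which $x_r$ acts nontrivially and then invoke Lemma~\ref{fps} with $X=\langle x_r\rangle$. But no such $q$-subgroup need exist. Concretely, take $q\mid r-1$ and let $H=\langle x_r\rangle\rtimes\langle d\rangle$ with $d$ of order $q^2$, where $d$ acts on $\langle x_r\rangle$ as an automorphism of order $q$ (so $d^q$ is central). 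This $H$ is an odd-order $Z$-group whose subgroups of order $rq$ are cyclic, hence a legitimate Frobenius complement; here $N=C_H(x_r)=\langle x_r\rangle\times\langle d^q\rangle$, $H/N\cong\mathbb Z_q$, yet the only $\langle x_r\rangle$-invariant $q$-subgroup of $H$ is the central $\langle d^q\rangle$, on which $x_r$ acts trivially. So Lemma~\ref{fps} gives nothing, and your ``forcing every such $q$ to equal $p$'' does not follow. The underlying obstruction is that every prime-order element of the Frobenius complement $H$ already lies in $N$, so no prime-order element is available to play the role required by Lemma~\ref{fps}.

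The paper's proof sidesteps this by reversing the order of discovery and building in a maximality hypothesis. It chooses $K$ with $C\le K\le M$ \emph{maximal} subject to $KF$ being Frobenius; then $K\trianglelefteq M$ (as $M/C$ is cyclic), and the failure of $G$ to be Frobenius or $2$-Frobenius yields $d_0\in M\setminus K$ of minimal order with $C_K(d_0)\ne 1$. Maximality of $K$ forces $K\langle d_0\rangle F$ not to be Frobenius, so some $d\in K\langle d_0\rangle$ has $C_F(d)\ne 1$; since $K$ acts freely on $F$, $\langle d\rangle\cap K=1$ and $d$ has prime order $t$. Now Lemma~\ref{fps} is applied with $X=\langle d\rangle$ (prime order) acting nontrivially on the $r$-group $\langle x_r\rangle$, giving $C_V(d)\ne 1$, and a prime-order $c\in C_K(d)$ is produced afterwards. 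The point is that the maximal-$K$ device manufactures a \emph{prime-order} element outside $K$ commuting with something in $K$, which is precisely what your approach cannot guarantee inside $C_G(c_1)$. Your closing ``cleaner alternative'' is in spirit the paper's argument; fleshing it out amounts to introducing this maximal $K$.
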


\begin{proof} Recall from Lemma~\ref{L5}, $C_G(x_r)F$ is a Frobenius group  and  $G \ne C_G(x_r)F$. Let $K$ be chosen so that $ C_G(x_r) \le K \le N_G( \langle x_r\rangle)$ is maximal by containment  such that $KF$ is a Frobenius group. As $G$ is not a Frobenius group, $G \ne KF$. Observe that, since $G/C_G(x_r)F$ is cyclic by Lemma~\ref{L11}, $KF$ is normal in $G$ and $K$ is   normal in   $N_G( \langle x_r\rangle)$.

If $C_G(k) \le K$ for all $k \in K^\#$, then, in particular, $C_{N_G(\langle x_r \rangle)}(k) \le K$ for all $k \in K^\#$. Since $K$ is a proper normal subgroup of $N_G(\langle x_r \rangle)$, $N_G(\langle x_r \rangle)$ is a Frobenius group with kernel $K$ by Lemma~\ref{Frob2}. But then Lemma~\ref{L13}   and the choice of $K$ implies that $G$ is a $2$-Frobenius group, which is a contradiction.  Hence there exists $k \in K^\#$ such that $C_G(k) \not \le K$. Let $k\in K$ be any such  element.   If $g \in C_G(k)$, then $K \cap K^g \ge \langle k \rangle$ and so, as $KF$ is a Frobenius group, $g$ normalizes $K$. Thus $C_G(k) \le N_G(K) = N_G(\langle x_r\rangle)$ and this is true for all $k \in K^\#$.

Let $d_0 \in N_G( \langle x_r\rangle)\setminus K$ be of minimal order  such that $C_K(d_0) \ne 1$. Then $|K\langle d_0\rangle: K| =t$ for some prime $t$. From the maximal choice of $K$, $K \langle d_0 \rangle $ is not a Frobenius complement in $F K\langle d_0 \rangle$ and so   there exists $d \in K \langle d_0 \rangle$ such that $C_F(d)\ne 1$ by Lemma~\ref{Frob}.  Since $C_F(k)= 1$ for all $k \in K^\#$, we have $\langle d \rangle \cap K = 1$.  Therefore, from the minimal choice of $d_0$, we have $K \langle d_0 \rangle= K\langle d \rangle$ and $d$ has order $t$.  Now,  either $d$ centralizes some $t$-element of $K$ or $t$ and $|K|$ are coprime. In the latter case, as  $C_K(d_0^n) \ge C_K(d_0)$ for all integers $n$, the minimal choice of the order of $d_0$ implies that $d_0$ has order $t$ as well. Therefore, in this case,   $\langle d \rangle$ and $\langle d_0\rangle $ are conjugate in $K\langle d_0 \rangle$ by Sylow's Theorem. Thus whatever happens we have $C_K(d) \ne 1$. If $V$ is a $t$-group then $C_V(d)\ne 1$.  So assume that $V$ is not a $t$-group. Then,
as $d \in N_G( \langle x_r\rangle)\setminus K$ and $K \ge C_G(x_r)$, $d$ acts non-trivially on $\langle x_r\rangle$ and so,  by Lemma~\ref{fps}, $C_V(d) \ne 1$.
Now let $c \in C_K(d)$ be chosen of prime order. Since $c \in K$ and $K$ is a Frobenius complement in $KF$, $C_V(c)\le C_F(c)=1$ and, since $c$ normalizes $\langle x_r \rangle$, $c \in C_G(x_r)$ by Lemma~\ref{fps} (or \ref{meta}).
Thus choosing  $v \in C_V(d)^\#$ we have the result  as claimed.
\end{proof}

\begin{lemma}\label{L14}  Suppose that $e \in C_G(y_s)^\#$. Then $C_F(e)=  1$.
\end{lemma}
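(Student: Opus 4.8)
The goal is to show that if $C_F(e) \neq 1$ for some $e \in C_G(y_s)^\#$, then we can build a path of length at most $8$ from $x$ to $y$, contradicting $\d(x,y) > 8$. The left-hand half of such a path is essentially already manufactured for us: Lemma~\ref{L13} gives $d \in G^\#$, $c \in C_G(x_r)^\#$ of prime order, and $v \in V^\#$ with
$$x \sim x_r \sim c \sim d \sim v.$$
That is a path of length $4$ reaching into $V$. On the right-hand side we have $x \sim$-style steps $y \sim y_s \sim e$, contributing length $2$ and reaching $e$, which by hypothesis centralizes some $f \in F^\#$. So we have a path $x,\dots,v$ of length $4$ ending at $v \in V^\#$ and a path $y,\dots,e,f$ of length $3$ ending at $f \in F^\#$. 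To splice these into a path of length $\le 8$ it suffices to connect $v$ to $f$ by a single edge, i.e. to show $[v,f] = 1$, or more realistically to connect them through one intermediate vertex, giving total length $4 + 1 + 1 + 3 = 9$ — which is one too many. So the plan must be slightly more economical.

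The fix is to use that $F = C_G(V)$ (Lemma~\ref{L2}): every element of $F$ centralizes every element of $V$, so in fact $v \sim f$ directly for \emph{any} $v \in V^\#$ and \emph{any} $f \in F^\#$. This removes the need for an intermediate vertex between the two halves. Thus the path becomes
$$x \sim x_r \sim c \sim d \sim v \sim f \sim e \sim y_s \sim y,$$
which has length $8$. This already contradicts $\d(x,y) > 8$. Hence no such $e$ exists, i.e. $C_F(e) = 1$ for all $e \in C_G(y_s)^\#$.

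The only point requiring a little care is the junction at $d$ and $v$: we need $d \sim v$, which is exactly the last adjacency asserted in Lemma~\ref{L13}, and we need $v \in V^\#$ so that $v \sim f$ holds by Lemma~\ref{L2}. Both are in hand. One should also double-check that all the listed elements are genuinely non-identity (so that they are legitimate vertices of $\Gamma$): $x_r, c, d$ are non-trivial by Lemma~\ref{L13}, $v \in V^\#$ and $f \in C_F(e)^\#$ by construction/hypothesis, $y_s \neq 1$ by definition, and $e \in C_G(y_s)^\#$. There is no real obstacle here; the lemma is a short distance-counting argument, and the main ``work'' was already done in establishing Lemma~\ref{L13}. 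The one thing to be vigilant about is not to accidentally insert a redundant vertex that would push the length to $9$ — the economy comes precisely from $F$ centralizing $V$.
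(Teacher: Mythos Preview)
Your proof is correct and is essentially identical to the paper's: both assume $C_F(e)\ne 1$, take $f\in C_F(e)^\#$, invoke Lemma~\ref{L13} for the path $x\sim x_r\sim c\sim d\sim v$, and use $V\le Z(F)$ (equivalently $F=C_G(V)$ from Lemma~\ref{L2}) to splice in the edge $v\sim f$, yielding the length-$8$ path $x\sim x_r\sim c\sim d\sim v\sim f\sim e\sim y_s\sim y$ contradicting $\d(x,y)>8$.
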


\begin{proof} If this statement is false, then  $C_F(e)\ne 1$ for some $e \in C_G(y_s)^\#$. Let $f  \in C_F(e)^\#$.  Lemma~\ref{L13} provides  $c \in C_G(x_r)^\#$,  $d \in G^\#$,  $v \in  V^\# \subseteq Z(F)^\#$    such that 
$$x\sim x_r \sim c \sim d \sim v \sim f\sim e \sim y_s \sim y$$ 
which contradicts $\d(x,y) > 8$.
\end{proof}

\begin{proof}[Proof of Theorem~\ref{1}]
Using Lemma~\ref{L13}  there exists $c \in C_G(x_r)^\#$, $d \in G^\#$ and  $v  \in V^\#$ such that
$x_r \sim c \sim d \sim v$.   Moreover we may suppose that $c$ has prime order.  Since $|C_G(x_r)|$ has odd order and any two elements of prime order commute in $C_G(x_r)$, we have $c \in Z(F(C_G(x_r)))$ and so $c \in Z$ and $\langle x_r,c \rangle$ are contained in a complement to $F$ in $Z$. Because of Lemma~\ref{L14} the situation between $x_r$ and $y_s$ is symmetric. In particular, $y_s \in Z$ by Lemma~\ref{L7}. Therefore, there exists an element $h$ of $F$ such that $\langle y_s, c^h \rangle$ is contained in a complement to $F$ in $Z$. Especially, $c^h$ centralizes $y_s$. Hence, as $V \le Z(F)$, we have the following path between $x$ and $y$:
$$x\sim x_r  \sim c\sim d \sim v=v^h  \sim d^h\sim c^h \sim  y_s \sim y$$ which contradicts $\d(x,y) > 8$.  This contradiction finally disproves the hypothesis that $G$ in not a Frobenius group  or a $2$-Frobenius group and the diameter of $\Gamma$ is greater than $8$. Hence  $G$ is either a Frobenius group or a $2$-Frobenius group  or $\Gamma$ has diameter at most $8$. In the former cases, Lemmas~\ref{L0} and \ref{L01} say $\Gamma$ disconnected. This proves Theorem~\ref{1}. \end{proof}

\section{Examples of diameter 8}

In this section we present a series of examples of  soluble groups $G$ with trivial centre such that $\Gamma(G)$ is connected and the diameter of $\Gamma(G)$ is $8$. This demonstrates that the bound on the diameter of $\Gamma(G)$ obtained in Theorem~\ref{1} (ii) is optimal.
We adopt the notation from the previous section. The examples appear in a situation similar to that studied in Lemma~\ref{L4}.

First we select $q$ a power of an odd prime  such that the prime $r$, with $r \ge 5$, divides $q-1$ exactly. Let  $\mathbb F = \mathrm{GF}(q^r)$ and  $\beta$ be the Frobenius automorphism of $\mathbb F$ of order $r$. Note that $r^2$ divides $q^r-1$. Let $t$ be a prime which divides $(q^r-1)/(q-1)$ but not $q-1$. As an example of this type of formation we may take $q=11$. In this case $r=5$ and $t= 3221$.

  Let $V$ be a $4$-dimensional symplectic space over $\mathbb F$ defined with respect to the form which has  matrix
   $J=\left(\begin{smallmatrix} 0&0&0&1\\
0&0&1&0\\
0&-1&0&0\\
-1&0&0&0\end{smallmatrix}\right)$. Then $H= \Sp(V)= \{A \in \mathrm{GL}(V)\mid AJA^T=J\}$ where $A^T$ is the transpose of $A$. Let

$$F = \left\{\left(\begin{smallmatrix}
1&0&0&0\\
a&1&0&0\\
b&x&1&0\\
c&d&-a&1
\end{smallmatrix}\right)\mid a,b,c,d,x \in \mathbb F, xa=d-b\right\}.$$
Then $F$ has order $q^{4r}$ and, since $q$ is odd, $F$ has nilpotency class $3$. In fact $F$ is a Sylow subgroup of $H$, but this is unimportant for our considerations.
Let $$z= \left(\begin{smallmatrix}
d&0&0&0\\
0&e&0&0\\
0&0&e^{-1}&0\\
0&0&0&d^{-1}\end{smallmatrix}\right)$$ have order $r^2$ and be such that $d, e \in \mathbb F$ are elements of order $r^2$ and the set $\{d^r,d^{-r},e^r,e^{-r}\}$ has order $4$ (which is why we need $r \ge 5$).
 Then $z \in N_H(F)$ and it is elementary to check that $C_F(z^r)=1$ as $x$ acts on $V$ with distinct eigenvalues. Let $$c=  \left(\begin{smallmatrix}
f&0&0&0\\
0&f&0&0\\
0&0&f^{-1}&0\\
0&0&0&f^{-1}\end{smallmatrix}\right)$$ where $f \in \mathbb F$ has order $t$. This time we calculate $$C_F(c) = \left\{\left(\begin{smallmatrix}
1&0&0&0\\
a&1&0&0\\
0&0&1&0\\
0&0&-a&1
\end{smallmatrix}\right)\mid a\in \mathbb F\right\}$$ which therefore has order $q$ and is elementary abelian.

Recall that $\beta$ induces an automorphism of $H$ by applying $\beta$ to the entries of the matrices in $H$. We also denote this automorphism by $\beta$.  Notice that $\beta$ normalizes $F$, $\langle z\rangle$ and $\langle c\rangle$.  Consider $x=z\beta $ in the semi-direct product $H\rtimes \langle \beta\rangle$. Let $D = \langle x, c\rangle$.

\begin{lemma}\label{Dstruct} $x^r$ has order $r$ and commutes with $c$ and $c^x= c^q \ne c$. In particular, $D$ is metacyclic of order $r^2t$  and $Z(D)=\langle x^r\rangle$.
\end{lemma}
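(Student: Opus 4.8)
The plan is to compute everything concretely inside $H \rtimes \langle \beta \rangle$, using that $\beta$ acts on diagonal matrices by raising each entry to the $q$-th power. First I would establish the order of $x = z\beta$. Since $\beta$ has order $r$ and normalizes $\langle z \rangle$, the element $x^r$ lies in $H$; in fact $x^r = z \cdot z^{\beta^{-1}} \cdot z^{\beta^{-2}} \cdots z^{\beta^{-(r-1)}}$, which is the ``norm'' of $z$ down to the fixed field $\mathrm{GF}(q)$. Because $z$ is the diagonal matrix with entries $d, e, e^{-1}, d^{-1}$ and $\beta$ raises these to the $q$-th power, $x^r$ is the diagonal matrix with entries $d^{1+q+\cdots+q^{r-1}}$, etc. Now $d$ has order $r^2$ and $r$ divides $q-1$ exactly, so $1 + q + \cdots + q^{r-1} \equiv r \pmod{r^2}$ (each $q^i \equiv 1 \pmod r$, and one checks the sum is $\equiv r$ but not $\equiv 0$ mod $r^2$ precisely because $r \| q-1$); hence $d^{1+q+\cdots+q^{r-1}} = d^r$ has order $r$. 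So $x^r$ is the diagonal matrix with entries $d^r, e^r, e^{-r}, d^{-r}$, which is nontrivial of order exactly $r$ (the four entries being distinct and of order $r$ by the hypothesis on $\{d^{\pm r}, e^{\pm r}\}$). Thus $x$ has order $r^2$ and $x^r$ has order $r$.

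Next I would verify the commuting and conjugation relations. The element $c$ is a diagonal matrix with entries in $\langle f \rangle$, and $x^r$ is a diagonal matrix, so they commute — any two diagonal matrices commute, giving $[x^r, c] = 1$. For $c^x = c^{z\beta}$: conjugation by the diagonal matrix $z$ fixes the diagonal matrix $c$, so $c^x = c^{\beta}$, which is the diagonal matrix obtained by raising each entry of $c$ to the $q$-th power, i.e. $c^x = c^q$. Since $f$ has order $t$ and $t \nmid q-1$, we have $q \not\equiv 1 \pmod t$, hence $c^q \neq c$. (Indeed $c^x$ has the same order $t$ as $c$, and $c, c^x, c^{x^2}, \dots$ are the $r$ distinct conjugates unless $q \equiv 1 \bmod t$; one only needs $c^x \neq c$ here.)

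Finally I would assemble the structure of $D = \langle x, c \rangle$. From $[x^r, c] = 1$ we get $\langle x^r \rangle \le Z(D)$; and $\langle x^r, c \rangle$ is a normal abelian subgroup of $D$ of order $r t$ (it is $\langle x^r \rangle \times \langle c \rangle$ since $\gcd(r,t)=1$), with $D/\langle x^r, c \rangle$ cyclic generated by the image of $x$, of order dividing $r$. Counting: $x$ has order $r^2$, $\langle x \rangle \cap \langle c \rangle = 1$ (coprime orders), and $c$ is normalized by $x$, so $\langle c \rangle \langle x \rangle$ is a subgroup of order $r^2 t$; since this contains both generators it equals $D$, so $|D| = r^2 t$ and $D$ is metacyclic with cyclic normal subgroup $\langle c \rangle$ (or $\langle x^r, c \rangle$) and cyclic quotient. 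For $Z(D)$: we already have $\langle x^r \rangle \le Z(D)$, and $Z(D)$ cannot contain $c$ or any element of $D$ projecting nontrivially to $D/\langle x^r, c \rangle$, because $c^x \neq c$ means $x$ acts nontrivially on $\langle c \rangle$; a short check on the remaining cosets (using $c^x = c^q$ with $q$ of multiplicative order $r$ mod $t$, or at least $\neq 1$) shows $Z(D) = \langle x^r \rangle$ exactly.

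The main obstacle is the congruence $1 + q + \cdots + q^{r-1} \equiv r \not\equiv 0 \pmod{r^2}$, i.e.\ pinning down that the norm $x^r$ has order exactly $r$ rather than being trivial; this is where the hypothesis that $r$ divides $q-1$ \emph{exactly} (and that $r^2 \mid q^r - 1$) is used, and it is the one genuinely arithmetic point in an otherwise routine matrix computation.
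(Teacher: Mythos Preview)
Your proposal is correct and follows essentially the same route as the paper's proof: compute $x^r$ as the norm of $z$, observe that diagonal matrices commute so $[x^r,c]=1$, note $c^x=c^\beta=c^q\neq c$ since $t\nmid q-1$, and read off the metacyclic structure and centre of $D=\langle c\rangle\langle x\rangle$.

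The only stylistic difference is in showing that $x^r$ has order exactly $r$. You do it arithmetically, verifying $1+q+\cdots+q^{r-1}\equiv r\pmod{r^2}$ from $r\,\|\,q-1$. The paper instead observes that $d^{1+q+\cdots+q^{r-1}}$ is the field norm of $d$ and hence lies in $\mathrm{GF}(q)$, whose multiplicative group has no element of order $r^2$ (again because $r\,\|\,q-1$); nontriviality is left implicit. Your explicit congruence makes the ``order exactly $r$'' conclusion watertight, which is a small improvement in rigour over the paper's terse assertion.
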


\begin{proof} We just have to consider the elements $d$ and $e$ of $\mathbb F$. We have  $(d \beta)^ r= d^{q^{r-1}+ \dots + q +1} \in \mathrm{GF}(q)$ (the fixed field of $\beta$ when acting on $\mathbb F$)and thus has order $r$. A similar statement holds for $e$ and therefore $x^r$ is a diagonal matrix and commutes with $c$.

Since $z$ commutes with $c$, we  have $c^x= c^\beta= c^q$.

Finally, as $\langle c \rangle$ is normalized by $x$, we have $D/\langle c \rangle \cong \langle x \rangle$. This proves the result because $\langle x \rangle$ has order $r^2$ and $\langle c \rangle $ has order $t$ is not centralized by $x$ but is centralized by $x^r$.
\end{proof}

Set $G= FD \le H \rtimes \langle \beta\rangle$. This is the group which we will demonstrate has a connected commuting graph of diameter $8$. Proving this statement is now our main aim. We let $\Gamma= \Gamma(G)$.

\begin{lemma}
$G$ is not a Frobenius or $2$-Frobenius group. In particular, $\Gamma$ is connected.
\end{lemma}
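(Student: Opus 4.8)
The plan is to show directly that $G = FD$ fails both the Frobenius and $2$-Frobenius conditions, using the explicit centralizers computed above. The "in particular" clause then follows from Theorem~\ref{1}(i), which says a soluble group with trivial centre has connected commuting graph precisely when it is neither Frobenius nor $2$-Frobenius; but first one must observe $Z(G) = 1$. For this I would note that $F = C_G(V)$ is the Fitting subgroup (so $V \le Z(F)$ and $Z(G) \le C_G(V) = F$), and that $D$ acts on $F$ with $c$ acting nontrivially (via the element $f$ of order $t$, since $C_F(c)$ was computed to be a proper subgroup of $F$) and $x^r$ a nontrivial diagonal matrix acting on $V$ with distinct eigenvalues on at least some coordinates; hence no nonidentity element of $F$ is centralized by all of $D$, and $Z(G) = 1$.

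Next, $G$ is not a Frobenius group: we have the normal subgroup $F$ and the complement $D$ (using $F \cap D = 1$, which holds since $F$ is a $q$-group while $|D| = r^2 t$ is coprime to $q$), and Lemma~\ref{Frob} says $G$ would be Frobenius iff $C_G(k) \le D$ for all $k \in D^\#$. But $c \in D^\#$ and by the computation above $C_F(c)$ is a nontrivial (elementary abelian of order $q$) subgroup of $F$, so $C_G(c) \not\le D$. Hence $G$ is not Frobenius.

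Then $G$ is not a $2$-Frobenius group. Suppose it were, with normal subgroups $K < L < G$ as in the definition, $L$ Frobenius with kernel $K$ and $G/K$ Frobenius with kernel $L/K$. Since $F = F(G)$ is nilpotent and normal, and $K$ is the Frobenius kernel of $L$ hence nilpotent and (by Lemma~\ref{Frob2} applied inside $G$, together with nilpotence) one checks $K \le F$; conversely $L/K$ is nilpotent so $L \le J$, the preimage of $F(G/F)$, which here is $FD_0$ for the appropriate part of $D$. The key point is that $G/K$ metacyclic forces tight numerical constraints: $|G/F| = r^2 t$ with $c^x = c^q \ne c$, so $D$ itself is metacyclic but $G/K$ being Frobenius with kernel $L/K$ would require $L/K$ to act fixed-point-freely, whereas $x^r$ is a central element of $D$ of order $r$ lying in $Z(G/F)$-type position and cannot be made to act fixed-point-freely on any candidate kernel. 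More concretely, I would argue: in a $2$-Frobenius group the Fitting quotient $J/F$-analogue is cyclic and the whole group modulo its kernel is metacyclic, and the element $c$ has a nontrivial centralizer $C_F(c)$ in the would-be kernel $K \le F$ unless $c \notin$ the relevant complement — but a short case analysis on where $c$ and $x^r$ sit (using that $c$ has order $t$ coprime to $r$, and $C_F(c) \ne 1$ while the analysis in Lemmas~\ref{Cxrodd}--\ref{L13} shows the kernel-complement structure is incompatible with $c$ having fixed points on $F$) yields a contradiction.

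The main obstacle is the $2$-Frobenius exclusion: unlike the Frobenius case, which is a one-line centralizer computation, ruling out the $2$-Frobenius structure requires identifying the candidate $K$ and $L$ and showing no valid choice exists. I expect the cleanest route is to show that if $G$ were $2$-Frobenius then (as remarked in the introduction) $G/K$ would be metacyclic and the prime graph of $G$ would be disconnected; but $r$, $t$, and $q$ all divide $|G|$ and the computed centralizers $C_F(c) \ne 1$ (so $t \sim q$ in the prime graph via an element of order $qt$... wait, $c$ of order $t$ centralizes an element of order $q$, giving an element of order $qt$) and $C_F(x^r)$-related relations connect the primes enough to force the prime graph connected, contradicting disconnectedness. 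This reduces the whole lemma to exhibiting elements of mixed order $qt$ (from $c$ acting on $C_F(c)$) and noting $q$, $r$, $t$ cannot separate into the two components a $2$-Frobenius prime graph requires.
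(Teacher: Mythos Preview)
Your Frobenius exclusion is correct and essentially matches the paper: $C_F(c)\ne 1$ rules out $F$ being a Frobenius kernel, and since the Frobenius kernel of any Frobenius group equals its Fitting subgroup, $G$ cannot be Frobenius.

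For the $2$-Frobenius case you start down the right path by noting $K\le F$, but then drift into vague talk of ``tight numerical constraints'' and ``a short case analysis'' without actually producing one. The paper's argument is a two-line completion of what you began: after $K\le F$, observe also that $F/K$ is nilpotent normal in $G/K$, hence $F/K\le F(G/K)=L/K$, so $F\le L$; but $F$ is then a nilpotent normal subgroup of the Frobenius group $L$, hence contained in its kernel $K$, giving $K=F$. Now $G/K=G/F\cong D$ would have to be a Frobenius group with kernel $L/K$, which is impossible since $Z(D)=\langle x^r\rangle\ne 1$ by Lemma~\ref{Dstruct}. That is the entire argument; no case analysis is needed.

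Your alternative prime-graph idea can also be made to work, but as written it has a gap. You correctly get $t\sim p$ (the prime under $q$) from $c$ commuting with a nontrivial element of $C_F(c)$. You then appeal to ``$C_F(x^r)$-related relations'' to connect $r$---but $C_F(x^r)=1$, so that gives nothing. The missing edge is $r\sim t$, which comes from $[x^r,c]=1$ in $Z(D)$, not from $F$ at all. With that edge added the prime graph of $G$ is connected, and since $2$-Frobenius groups have disconnected prime graph (Gruenberg--Kegel, Williams), you are done. So the approach is salvageable, but the paper's direct structural argument is both shorter and self-contained.
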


\begin{proof}  Suppose the statement is false. Obviously $G$ is not a Frobenius group as $C_F(c) \ne 1$. Thus there are proper normal subgroups $K$ and $L$ of $G$ such that $L$ is a Frobenius group with kernel $K$ and $G/K$ is a Frobenius group with kernel $L/K$. Since the kernels of Frobenius groups coincide with the Fitting subgroup, we have $K \le F(G)=F$ and $F/K \le L/K$. But then $F \le L$ and so $K= F$. Since, by Lemma~\ref{Dstruct}, $G/F$ is not a Frobenius group we have a contradiction. Now Theorem~\ref{1} (i) says that $\Gamma$ is connected.
\end{proof}

\begin{lemma} The following statements hold.
\begin{enumerate}
\item [(i)]$C_G(x) = \langle x\rangle$;
\item [(ii)]$C_G(x_r) = D$ for any element $x_r$ of order $r$ in $C_G(x)$; and
\item [(iii)]$C_G(c^*) = \langle c,x^r\rangle C_F(c)$ for any element $c^*$ of order $t$ in $C_X(x^r)$.
\end{enumerate}
\end{lemma}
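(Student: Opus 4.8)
The plan is to derive all three statements from the single decomposition $G=FD$. At the outset I would record that $F$ is normalised by $z$, $\beta$ and $c$, hence $F\trianglelefteq G$; that $F\cap D=1$, since every element of $F$ is unipotent while $D\cap H$ consists of diagonal matrices; and consequently $|G|=|F|\,|D|$ and $G/F\cong D$ via $dF\leftrightarrow d$. The only tool I then need is the following: for every $w\in D$,
$$C_G(w)=C_D(w)\,C_F(w).$$
The inclusion $\supseteq$ is clear. For $\subseteq$, if $g\in C_G(w)$ then the image of $g$ in $G/F\cong D$ centralises $w$, so $g\in C_D(w)F$; writing $g=d_0f_0$ with $d_0\in C_D(w)$ and $f_0\in F$ gives $f_0=d_0^{-1}g\in C_G(w)\cap F=C_F(w)$, as wanted.

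Next I would compute $C_D(w)$ for $w\in\{x^r,x,c\}$ from the metacyclic structure of $D=\langle c\rangle\rtimes\langle x\rangle$ furnished by Lemma~\ref{Dstruct}: here $\langle c\rangle$ is the unique (hence normal) Sylow $t$-subgroup, $\langle x\rangle$ is cyclic of order $r^2$, $c^x=c^q$ with $q$ of multiplicative order $r$ modulo $t$ (because $t$ divides $(q^r-1)/(q-1)$ but not $q-1$), and $x^r\in Z(D)$. A one-line calculation with a general element $c^ix^j$ gives $C_D(x)=\langle x\rangle$ (using $t\nmid q-1$), $C_D(c)=\langle c,x^r\rangle$ (using that $q$ has order $r$ mod $t$), and $C_D(x^r)=D$. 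I would also dispatch the two ``normalisation'' reductions implicit in the statement: $\langle c\rangle$ is the only subgroup of $D$ of order $t$, so $\langle c^*\rangle=\langle c\rangle$ and $C_G(c^*)=C_G(c)$ for any $c^*$ of order $t$ in $D=C_G(x^r)$; and, once part~(i) is available, $C_G(x)=\langle x\rangle$ is cyclic of order $r^2$, whose unique subgroup of order $r$ is $\langle x^r\rangle$, so $C_G(x_r)=C_G(x^r)$ for any $x_r$ of order $r$ in $C_G(x)$.

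It remains to identify $C_F(w)$ for $w\in\{x^r,x,c\}$. The centraliser $C_F(c)$, elementary abelian of order $q$, is exhibited just before the lemma, so the reduction lemma immediately yields part~(iii): $C_G(c^*)=C_G(c)=C_D(c)C_F(c)=\langle c,x^r\rangle C_F(c)$. For the remaining two I would first observe that $x^r=z^r$: since $\beta$ has order $r$ and normalises the cyclic group $\langle z\rangle$, acting on it as the $q$-power map, $x^r=(z\beta)^r=z^{1+q+\dots+q^{r-1}}$; writing $q-1=rm$ with $r\nmid m$ (so that $q^j\equiv 1+jrm\pmod{r^2}$) and using that $r$ is odd, one gets $1+q+\dots+q^{r-1}\equiv r\pmod{r^2}$, while $z$ has order $r^2$, so $x^r=z^r$. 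Hence $C_F(x^r)=C_F(z^r)=1$ (checked in the construction) and $C_F(x)\le C_F(x^r)=1$. Feeding these into the reduction lemma gives $C_G(x^r)=C_D(x^r)C_F(x^r)=D$, which together with the reduction above is part~(ii), and $C_G(x)=C_D(x)C_F(x)=\langle x\rangle$, which is part~(i).

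Everything here is bookkeeping once the decomposition $G=FD$ is secured; the one point that needs genuine care --- and the only place the hypotheses on $r$ and $q$ are used again (namely that $r$ is odd and divides $q-1$ exactly, with $r\ge 5$ having already supplied $C_F(z^r)=1$) --- is the identity $x^r=z^r$, equivalently the fact that $x^r$ still acts fixed-point-freely on $F$. I would also be careful to verify at the very start that $F$ is genuinely normal in $G$ (equivalently that $c$, $z$ and $\beta$ each normalise $F$), since the whole argument rests on $G/F\cong D$.
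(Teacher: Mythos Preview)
Your proof is correct and follows essentially the same strategy as the paper's (very terse) argument: the factorisation $C_G(w)=C_D(w)\,C_F(w)$ for $w\in D$, combined with the centraliser computations in $D$ and in $F$ already set up in the construction. Your version is more careful in one genuinely useful respect: you prove the identity $x^r=z^r$ (via $1+q+\cdots+q^{r-1}\equiv r\pmod{r^2}$), which is exactly what is needed to justify the paper's bare assertion that $C_F(x^r)=1$.
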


\begin{proof} We have already seen
$C_F(x)=1$ and $C_D(x)=\langle x \rangle$. Hence (i) holds. We also know $C_F(x^r)= 1$, so as $c$ and $x^r$ commute, we have (ii). Finally, if $w \in D^\#$ and $C_F(w) \ne 1$, then $w \in \langle c\rangle$ and this means (iii) holds.
\end{proof}

We now set $g= \left(\begin{smallmatrix}
1&0&0&0\\
0&1&0&0\\
1&1&1&0\\
0&1&0&1
\end{smallmatrix}\right) \in F$  and put $y= x^g$.

\begin{lemma}\label{M3} If  $g\in C_F(c)^\#$  and $h \in (C_F(c)^g)^\#$, then $[g,h]\ne 1$.
\end{lemma}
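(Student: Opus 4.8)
Here $g$ denotes the matrix displayed immediately before the lemma; since its $(3,1)$-entry is $1$ we have $g\notin C_F(c)$, and the content of the statement is that no non-identity element of $C_F(c)$ commutes with any non-identity element of the conjugate $C_F(c)^g$ (which is therefore a subgroup distinct from $C_F(c)$). The plan is to carry out the two short matrix computations this requires. By the centralizer calculation made just above, $C_F(c)=\{M_a\mid a\in\mathbb F\}$ with $M_a=\left(\begin{smallmatrix}1&0&0&0\\a&1&0&0\\0&0&1&0\\0&0&-a&1\end{smallmatrix}\right)$ and $M_aM_b=M_{a+b}$, so the non-identity elements of $C_F(c)$ are exactly the $M_a$ with $a\ne0$.

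First I would conjugate by $g$. Writing $g=I+E$ with $E=\left(\begin{smallmatrix}0&0&0&0\\0&0&0&0\\1&1&0&0\\0&1&0&0\end{smallmatrix}\right)$, one checks $E^2=0$, hence $g^{-1}=I-E$, and a direct multiplication gives $g^{-1}M_ag=N_a$ where $N_a=\left(\begin{smallmatrix}1&0&0&0\\a&1&0&0\\-a&0&1&0\\-2a&-a&-a&1\end{smallmatrix}\right)$; so the non-identity elements of $C_F(c)^g$ are the $N_b$ with $b\ne0$. The key step is then to multiply out $M_aN_b$ and $N_bM_a$: these two lower unitriangular matrices agree in every entry except the $(4,1)$-entry, where $M_aN_b$ exceeds $N_bM_a$ by $2ab$. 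Since the fourth column of the inverse of a $4\times4$ lower unitriangular matrix is the fourth standard basis vector, it follows that $[M_a,N_b]=(N_bM_a)^{-1}(M_aN_b)=\left(\begin{smallmatrix}1&0&0&0\\0&1&0&0\\0&0&1&0\\2ab&0&0&1\end{smallmatrix}\right)$. As $q$ is odd we have $2\ne0$ in $\mathbb F$, so this matrix is non-trivial whenever $a\ne0$ and $b\ne0$; thus $[M_a,N_b]\ne1$, which is the assertion.

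The only obstacle is keeping the $4\times4$ bookkeeping straight; there is no conceptual difficulty. It may be reassuring to note that $M_a$ and $N_b$ generate two conjugate long-root subgroups of $\mathrm{Sp}(V)$ whose commutator lands in the \emph{corner} root subgroup contained in $Z(F)$, and the computation simply records that the resulting commutator pairing $\mathbb F\times\mathbb F\to\mathbb F$, $(a,b)\mapsto2ab$, is non-degenerate. This is precisely the input needed later to exclude a path of length $7$ of the shape $x\sim x_r\sim c\sim M_a\sim N_b\sim c^g\sim (x_r)^g\sim y$ joining $x$ to $y=x^g$.
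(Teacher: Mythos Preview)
Your proof is correct and follows essentially the same approach as the paper: compute $C_F(c)^g$ explicitly and then verify non-commutation by direct matrix calculation. You in fact supply more detail than the paper, which merely writes ``and then readily calculate that the lemma holds''; you carry the computation through to the explicit commutator $[M_a,N_b]=I+2ab\,E_{41}$ and invoke $q$ odd. Your value $-2a$ for the $(4,1)$-entry of $g^{-1}M_ag$ is the correct one (the paper records $0$ there, a harmless slip that does not affect the argument), and your interpretation of the slightly ambiguous use of the letter $g$ in the statement is exactly the intended one, as confirmed by the application in the proof of Proposition~\ref{M4}.
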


\begin{proof} We have $$C_F(c)^g = \left\{\left(\begin{smallmatrix}
1&0&0&0\\
a&1&0&0\\
-a&0&1&0\\
0&-a&-a&1
\end{smallmatrix}\right)\mid a\in \mathbb F\right\}$$ and then readily calculate that the lemma holds.
\end{proof}

\begin{prop}\label{M4} $\d(x,y) = 8$ and $\Gamma$ has diameter $8$.
\end{prop}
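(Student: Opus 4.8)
The plan is to obtain $\d(x,y)\le 8$ cheaply from Theorem~\ref{1}, and then to prove $\d(x,y)\ge 8$ by computing the balls of radii $1,2,3$ about $x$ in $\Gamma$ and showing they stay disjoint from, and unjoined to, the corresponding balls about $y$. First, $Z(G)=1$: since $Z(G)\le C_G(x)\cap C_G(c)=\langle x\rangle\cap C_G(c)=\langle x^r\rangle$ and $x^r\notin Z(G)$ (because $C_F(x^r)=1\ne F$), we have $Z(G)=1$; so $\Gamma$ is the commuting graph of a soluble group with trivial centre, it is connected as already shown, and Theorem~\ref{1}(ii) gives $\d(x,y)\le 8$. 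Alternatively, for $f\in C_F(c)^\#$, $v\in Z(F)^\#$ and $f'\in(C_F(c)^g)^\#$ the walk
$$x\sim x^r\sim c\sim f\sim v\sim f'\sim c^g\sim (x^r)^g\sim y$$
exhibits this, every edge being immediate from the displayed centraliser computations, from $C_F(c^g)=C_F(c)^g$, and from $v$ centralising all of $F$. It remains to prove $\d(x,y)\ge 8$.

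From the centraliser data, the vertices within distance $1$ of $x$ are exactly $\langle x\rangle^\#$, those within distance $2$ are exactly $D^\#$ (as $C_G(a)=\langle x\rangle$ or $D$ for $a\in\langle x\rangle^\#$ of order $r^2$ or $r$), and those within distance $3$ form $B_3(x):=(D\cup C_G(c))^\#$ — one checks the last by running through the element orders $r,r^2,t,rt$ in $D^\#$ and noting $C_G(a)\le D$ unless $a\in\langle c\rangle^\#$, in which case $C_G(a)=C_G(c)=C_F(c)\langle c,x^r\rangle$. Conjugating by $g$, the vertices within distance $3$ of $y$ are exactly $B_3(y):=(D^g\cup C_G(c)^g)^\#$. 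Since a walk of length $\le 7$ from $x$ to $y$ would produce a vertex of $B_3(x)$ equal or adjacent to a vertex of $B_3(y)$, it suffices to rule this out.

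For that I would classify $C_G(a)$ for $a\in B_3(x)$: either $a\in C_F(c)^\#$, or $C_G(a)$ is contained in $D$, equals $C_G(c)$, or equals a conjugate $D^w$ of $D$ with $w\in C_F(c)$ — this last occurring exactly when $a$ is an $r$-element of $C_F(c)\langle x^r\rangle$, which one sees using that $x^r$ acts \emph{fixed-point-freely} on $C_F(c)$, since it acts there (identifying $C_F(c)$ with $\mathbb F$) as the scalar $(e/d)^{1+q+\dots+q^{r-1}}$, non-trivial because $d^r\ne e^r$, which is part of the requirement that $\{d^r,d^{-r},e^r,e^{-r}\}$ has size $4$. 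Dually for $B_3(y)$ with $g$-conjugates throughout. Now if $a\sim b$ with $a\in B_3(x)$, $b\in B_3(y)$: either $a\in C_F(c)^\#$ and $b\in(C_F(c)^g)^\#$, forbidden by Lemma~\ref{M3} (which also gives $C_F(c)\cap C_F(c)^g=1$, covering $a=b$); or one of the other alternatives for $C_G(a)$ or $C_G(b)$ applies, and then $b\in C_G(a)$ (resp.\ $a\in C_G(b)$) together with membership of the other ball forces a non-trivial element into an intersection of the form $D^w\cap D^g$, $C_G(c)\cap D^g$, $C_G(c)\cap C_G(c)^g$ or $D^w\cap C_G(c)^g$ ($w\in C_F(c)$), or else into $D^w\cap F$ or $C_F(c)\cap D^g$ — each trivial: the ones meeting $F$ because conjugates of $D$ meet $F$ trivially, $C_F(c)\cap C_G(c)^g$-type ones by Lemma~\ref{M3}, and the rest by short matrix computations that use $g\notin C_F(c)$ (visible from the explicit matrix for $g$ and the description of $C_F(c)$) and again the distinctness of $d^{\pm r},e^{\pm r}$.

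The step I expect to be the genuine obstacle is precisely this classification-and-matching: determining $C_G(w_0d_0)$ for all $w_0\in C_F(c)^\#$, $d_0\in\langle c,x^r\rangle$ — in particular recognising that a non-trivial $\langle x^r\rangle$-component of $d_0$ makes $w_0d_0$ an $r$-element generating a $C_F(c)$-conjugate of $\langle x^r\rangle$, hence with centraliser a $C_F(c)$-conjugate of $D$ — and then checking that none of the finitely many resulting subgroups, nor their $g$-conjugates, overlaps with $D$, $C_G(c)$ or their $g$-conjugates. The two structural facts that make this manageable are the Frobenius structure of $C_F(c)\langle x^r\rangle$ coming from the fixed-point-free action, and the one-line determination of $C_F(c)^g$ recorded in the proof of Lemma~\ref{M3}; granting these, the remaining verifications are routine conjugations of the matrices exhibited in the construction, all of which turn on the hypothesis that $\{d^r,d^{-r},e^r,e^{-r}\}$ has size $4$.
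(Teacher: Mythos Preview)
Your ball-based strategy is sound in principle and would establish $\d(x,y)\ge 8$ if carried through, but it differs markedly from the paper's argument and, as written, has a gap. The paper's device is to assume at the outset that all \emph{inner} vertices of a geodesic have prime order (valid since $a\sim b\sim c$ implies $a\sim b^n\sim c$ for any $b^n\ne 1$). With that reduction the path is essentially forced: $v_1\in\langle x^r\rangle^\#$, $v_2\in\langle c\rangle^\#$, and then the prime-order elements of $C_G(c)$ are exactly those of $\langle x^r\rangle$, $\langle c\rangle$, $C_F(c)$, or a $C_F(c)$-conjugate of $\langle x^r\rangle$. The single clean observation replacing your whole matching step is that a move to $(x^r)^h$ with $h\in C_G(c)$ is a dead end: since $h$ centralises $c$ one has $\langle c\rangle^h=\langle c\rangle$, so the prime-order neighbours of $(x^r)^h$ lie in the abelian group $\langle (x^r)^h,c\rangle$, forcing a return through $\langle c\rangle$. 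Hence any geodesic must reach some $w\in C_F(c)^\#$ at step~3 and, symmetrically, some $w^*\in C_F(c)^g{}^\#$ at step $k-3$; Lemma~\ref{M3} then gives $\d(w,w^*)\ge 2$, so $k\ge 8$.

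Your classification of $C_G(a)$ for $a\in B_3(x)$ is incomplete: you omit the composite-order elements $a=w_0c^j$ with $w_0\in C_F(c)^\#$ and $j\ne 0$, for which $C_G(a)=C_F(c)\langle c\rangle$ (not contained in $D$, not equal to $C_G(c)$, not a conjugate of $D$), and similarly $a=((x^r)^k)^w c^j$ with $w,k,j$ all nontrivial gives $C_G(a)=\langle (x^r)^w,c\rangle\subsetneq D^w$. These omissions are harmless in the end---the extra centralisers sit inside sets you already handle---but the argument as stated is wrong. The fix is exactly the prime-order reduction you did not invoke: one may assume both endpoints $a\in B_3(x)$, $b\in B_3(y)$ of a putative edge have prime order (replace $a$ by a prime-order power; it still lies in $B_3(x)$ via the same length-$3$ walk), and then your four-case list \emph{is} exhaustive. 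Even so, the residual ``routine conjugations'' you defer---checking that $D^w\cap D^{g w'}$, $D^w\cap C_G(c)^g$, etc.\ are trivial for all $w\in C_F(c)$, $w'\in C_F(c)^g$---amount to rather more bookkeeping than the paper's one-line dead-end argument.
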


\begin{proof} Notice that, if $a \sim b \sim c$, then, for all integers $n$, $a \sim b^n \sim c$ so long as $b^n \ne 1$. In particular, this means that we may  assume that in a path from $x$ to $y$, the inner terms of the path have prime order.
Since $C_G(x)= \langle x \rangle$ and $C_G(y)= \langle y\rangle$, any path from $x$ to $y$ must have the form $$x \sim x^r\sim  \dots \sim (x^r)^g \sim x^g= y.$$ By Lemma~\ref{M3} (ii), $C_G(x^r)= D$ and the only way to advance from $x$ is to move to an element of order $t$. There is a unique cyclic group of this order in $D$ and so we must have
 $$x \sim x^r\sim c \sim  \dots \sim c^g \sim (x^r)^g \sim x^g= y.$$
 The next vertex must be contained in  $$C_G(c)= \langle x^r \rangle \langle c \rangle C_F(c)$$ by Lemma~\ref{M3}(iii).
The elements of prime order in $C_G(c)$ are conjugate to elements of $\langle x^r\rangle$, $C_F(c)$ or $\langle c \rangle$.  Suppose that $x \sim x^r \sim c \sim (x^r)^h$ for some $h \in C_G(c)$. Then the ``prime"  neighbours of $(x^r)^h$ are in the abelian group $\langle (x^r)^h, c \rangle$ and so there is no way we can take this route without returning via some power of $c$. It follows that there must exists $w \in C_F(c)^\#$ and $w^* \in C_F(c^g)^\#$ such that
 $$x \sim x^r\sim c \sim w \sim \dots \sim w^* \sim c^g \sim (x^r)^g \sim x^g= y.$$
By Lemma~\ref{M3}, there are no choices of $w\in C_F(c)^\#$ and $w^* \in C_F(c^g)^\#$ such that $[w,w^*]=1$. Since $F$ is nilpotent, we have $\d(w,w^*)=2$ and so we conclude $\d(x,y)=8$.   As $\Gamma$ is connected, we obtain $\Gamma$ has diameter $8$ by using Theorem~\ref{1}.
 \end{proof}

Finally we note that as $q=11$, $r=5$ and $t= 3221$ satisfies our initial criteria we have an explicit example of order $11^{20}5^23221$.

\end{document}